\newcommand{\cut}[1]{}
\def\epsilon{\varepsilon}
\newtheorem{theorem}{Theorem}
\newtheorem{proposition}[theorem]{Proposition}
\newtheorem{lemma}[theorem]{Lemma}
\newtheorem{corollary}[theorem]{Corollary}
\begin{document}
\begin{frontmatter}

\title{Quantitative bounds for Markov chain convergence: Wasserstein
and total~variation~distances}
\runtitle{Wasserstein and TV convergence of Markov chains}

\begin{aug}
\author[a]{\fnms{Neal} \snm{Madras}\corref{}\thanksref{a}\ead[label=e1]{madras@mathstat.yorku.ca}}
\and
\author[b]{\fnms{Deniz} \snm{Sezer}\thanksref{b}\ead[label=e2]{adsezer@math.ucalgary.ca}}
\runauthor{N. Madras and D. Sezer}
\address[a]{Department of Mathematics and Statistics, York University,
4700 Keele Street, Toronto, ON M3J 1P3, Canada. \printead{e1}}
\address[b]{Department of Mathematics and Statistics,
University of Calgary, 2500 University Drive, Calgary, AB T2N 1N4,
Canada. \printead{e2}}
\end{aug}

\received{\smonth{7} \syear{2008}}
\revised{\smonth{9} \syear{2009}}

%
\begin{abstract}
We present a framework for obtaining explicit bounds on the rate of
convergence to equilibrium of
a Markov chain on a general state space, with respect to both total
variation and Wasserstein distances. For Wasserstein bounds, our main
tool is Steinsaltz's convergence theorem for locally contractive
random dynamical systems.
We describe practical methods for finding
Steinsaltz's ``drift functions'' that prove local contractivity.
We then use the idea of ``one-shot coupling'' to derive criteria
that give bounds for
total variation distances in terms of Wasserstein distances.
Our methods are applied to two examples: a two-component
Gibbs sampler for the Normal distribution and a random logistic
dynamical system.
\end{abstract}

%
\begin{keyword}
\kwd{convergence rate}
\kwd{coupling}
\kwd{Gibbs sampler}
\kwd{iterated random functions}
\kwd{local contractivity}
\kwd{logistic map}
\kwd{Markov chain}
\kwd{random dynamical system}
\kwd{total variation distance}
\kwd{Wasserstein distance}
\end{keyword}

\end{frontmatter}

\section{Introduction}\label{sec1}

In many theoretical or applied problems involving positive recurrent
Markov chains, it is important to estimate the number of iterations
until the distribution of the chain is ``close'' to its
equilibrium distribution.
Suppose we have a Markov chain with state space $\chi$,
initial state $x$, transition
probability kernel $P$ and limiting
stationary distribution $\pi$.
We would like a quantitative bound such as
\[
d(P^{n}(x,\cdot),\pi(\cdot)) \leq g(x,n) ,
\]
where $d$ is a metric on the set of probability measures
and $g(x,n)$ is a function that can be computed explicitly.
For example, knowledge of such a function $g$ can be valuable to
Bayesian statisticians using Markov chain Monte Carlo (MCMC)
approximations because it
tells them how many MCMC steps will ensure a good
approximation to the posterior distribution under consideration.
An excellent survey on the theory of general state
space Markov chains and MCMC is~\cite{RobRos04}.

An important technical point is the specification of the metric $d$ on the
set of probability measures.
Two common choices are the \textit{total variation} (TV) \textit{metric}
(denoted $d_{\mathrm{TV}}$)
and the \textit{Wasserstein metric} (denoted $d_{\mathrm{W}}$);
see Section \ref{sec-metrics} for definitions and basic properties
of these two metrics. \cut{Informally, consider random variables $X_1$ and
$X_2$ on $\chi$ with respective probability measures $\mu_1$ and $\mu_2$.
Then the TV distance between $\mu_1$ and $\mu_2$ is the smallest
possible value of $\Pr\{X_1\neq X_2\}$ among all
couplings (joint distributions) of $X_1$ and $X_2$.
The Wasserstein distance between $\mu_1$
and $\mu_2$ is defined in terms of a metric $\rho$ on $\chi$; it is
the smallest possible expected distance $E(\rho(X_1,X_2))$ among all
couplings of $X_1$ and $X_2$. We shall sometimes
abuse our terminology slightly by referring to the [TV or Wasserstein]
distance between $X_1$ and $X_2$, by which
we mean the distance between $\mu_1$ and $\mu_2$.}

There is a rich literature on Markov chain convergence in total variation
distance. Many tools have been developed for convergence in
TV, involving probabilistic methods (for example,\ coupling, strong
uniform times;
see \cite{Diac,Jerrum,RobRos04} for reviews),
analytic methods (spectral analysis, Fourier analysis,
operator theory; see \cite{Diac,Saloff})
and geometric methods (path bounds, isoperimetry; see \cite{Jerrum,Saloff}).
Much of the progress, and many of the sharpest results,
have been for discrete state spaces \cite{Diac,Jerrum,Saloff},
including spaces related to graphs, algebraic structures, or models from
statistical physics. Some results extend to general state spaces,
but some basic discrete properties and methods do not have
convenient analogs in the general case. Continuous state spaces
are of particular interest in Bayesian MCMC applications
\cite{Gilks,RobRos04}, but quantitative rigorous results about realistic
examples are scarce.

Frequently, the desirable functions $g$ to seek are of the
form $g(x,n)=C(x)r^{n}$, where $C(x)$ and $r$ can be computed
explicitly. The existence of such a function for the TV metric is
called \textit{geometric ergodicity} and is known to hold under fairly
general conditions (see, for example,\ \cite{Meyn,RobRos97}). Explicit
identification of such functions can be an intricate task, however.
A classical result in this context is due to Doeblin: if
there exists a probability measure $\nu$ and $0<\epsilon<1$ such
that $P(x,\mathrm{d}y)\geq\epsilon\nu(\mathrm{d}y)$ for every $x$, then
$d_{\mathrm{TV}}(P^{n}(x,\cdot),\pi)\leq(1-\epsilon)^n$. It is possible
to get similar bounds using coupling when Doeblin's condition holds
only on a subset $K$, if a ``drift function'' to $K$ exists. More
precisely, one needs
(i) $P(x, \mathrm{d}y)\geq\epsilon\nu(\mathrm{d}y)$ for all $x\in K$;
(ii) a function $V>1$ and a constant $\alpha>1$ such that
$E(V(Y_{n+1})|Y_{n}=y)
<V(y)/\alpha$ for all $y\in K^c$. These conditions are called
(i) \textit{minorization} and (ii) \textit{drift conditions}
\cite{Meyn,Ros95}. For practitioners who want to
implement these conditions, the challenge is to identify such a set
$K$ and a drift function $V$ that lead to tractable calculations and
good results. See
\cite{HobGey98} for an impressive application of these conditions to a
Bayesian random effects model. A good survey and another realistic
application is in \cite{JoHo}.

Coupling arguments for proving TV bounds typically use two coupled
versions of a Markov chain that coalesce relatively quickly.
This is often technically easier to do in discrete state spaces
than in state spaces with no atoms.
Minorization and drift conditions offer one solution to
this difficulty: coalescence
is facilitated when the coupled chains are simultaneously in the set $K$.
However, in many situations, it may be hard to force
coupled chains to coalesce, but it may be easier
to force them to come (and stay) very close to each other.
Closeness of two chains in the metric of the state
space roughly corresponds to closeness of their
distributions in the Wasserstein distance.
For this reason,
the Wasserstein distance can be a tractable alternative to the total
variation distance for problems in continuous state spaces
(see, for example, \cite{Gibbs}).
Although Wasserstein convergence can be weaker than TV convergence,
we shall show that
under certain conditions, bounds on the rate of Wasserstein convergence
can be used to get bounds on the rate of TV convergence
(see Section 4). Thus, proving Wasserstein convergence is sometimes a
step toward proving TV convergence. Huber \cite{Huber} also uses
this general philosophy, employing rather different methods from ours.

A particularly successful framework for studying convergence in
Wasserstein distance is random dynamical systems, or
iterated function systems \cite{DiacFreed99,Steinsaltz}. An iterated
function system is a sequence of random maps of the form
$F_{n}(x)=f_{1}\circ f_2\circ\cdots\circ f_n(x)$ or $\tilde
{F}_{n}(x)=f_{n}\circ f_{n-1}\circ\cdots\circ f_1(x)$,
where $f_{1},f_{2},\ldots$ are independent and identically
distributed (i.i.d.) random maps. (Two examples are described later
in this section.) The sequence
$\{\tilde{F}_{n}(x) \dvtx n\geq1\}$ is called the
\textit{forward sequence} and is a Markov chain. Many examples of Markov
chains can be represented as forward iterates of i.i.d. random maps.
$\{F_{n}(x) \dvtx n\geq1\}$ is called the \textit{backward} sequence and,
under certain
conditions, it converges pointwise to a random variable, $X_\infty$,
independent of the starting point $x$. If $X_\infty$ exists, in
which case the system is called \textit{attractive}, the distribution of
$X_{\infty}$ is also the stationary distribution $\pi$ of the Markov chain
$\tilde{F}_{n}(x)$. The rate at which $E[\rho(F_{n}(x),X_\infty)]$
converges to zero is an upper bound on the rate of convergence in
distribution of the Markov chain $\tilde{F}_{n}(x)$ to $\pi$ in Wasserstein
distance. Indeed, since $F_{n}(x)$ has distribution $P^n(x,\cdot)$
(as does $\tilde{F}_{n}(x)$) and since $X_{\infty}\sim\pi$,
we have
%
\begin{equation}
d_{\mathrm{W}}(P^n(x,\cdot),\pi) \leq E[\rho(F_{n}(x),X_\infty)] .
\end{equation}

One condition that guarantees attractivity is strong contractivity,
that is, $E[\log\operatorname{Lip} f]<0$, where $\operatorname{Lip} f$ is the Lipschitz
constant of the (random) function $f$.
This condition is a generalization of the stronger condition that
there exists a constant $r\in(0,1)$ such that $\rho(f(x),f(y)) \leq
r\rho(x,y)$ for all $x$ and $y$, with probability 1.
(Gibbs \cite{Gibbs} used a variation of this condition to get a bound
for the Wasserstein distance of a Markov chain $X _{n}$ to its
stationary distribution using coupling. See also \cite{DiacFreed99}
for a related result.)
However, applications frequently require weaker conditions.
Steinsaltz \cite{Steinsaltz} proves
attractivity under a more general condition, called ``local
contractivity'', which says that there exists a ``drift function''
$\phi\dvtx\mathcal{X}\mapsto[1,\infty)$ and a constant $r\in(0,1)$ such that
\[
G_{n}(x) := E[D_x F_{n}] \leq \phi(x)r^n ,
\]
where $D_xf:=\limsup_{y\rightarrow
x}\frac{\rho(f(x),f(y))}{\rho(x,y)}$. He proves that if local
contractivity holds, then
\[
E[\rho(F_{n}(x),X_\infty)] \leq C_{x}r^{n}\qquad
\mbox{for every }n\geq1,
\]
where $C_{x}$ is a number that can be computed explicitly; see Section
\ref{subsec-contrac} for further discussion.
Steinsaltz's use of the term ``drift'' is analogous to, but different
from, Rosenthal's use (which, in turn, is closely related to Foster--Lyapunov
functions; see \cite{FMM} for a review and references).

Like the minorization and drift conditions, the local contractivity
condition requires preliminary work to obtain a drift function.
The goal of the first part of this paper (Section \ref{sec3}) is to provide
a systematic framework for doing this.

We developed our methods using two examples. The first is a
simple Gibbs sampler chain for Bayesian estimation of the mean and
variance of a Normal distribution. The second example is a
randomized version of the classical logistic map from dynamical
systems theory.

The paper is organized as follows. The remainder of this section is
devoted to descriptions of our two main examples.
Section \ref{sec-metrics} provides definitions and basic properties of the
Wasserstein and total variation metrics.
Section \ref{sec3} examines the task of finding a drift function that
produces quantitative bounds on Wasserstein convergence.
Section \ref{subsec-contrac} reviews the results
of Steinsaltz \cite{Steinsaltz} and Section \ref{sec3.2} presents an approach to
finding drift functions by looking for sub-eigenfunctions of
a certain dominating operator. Section \ref{sec3.3} then uses this approach to find
drift functions for our Gibbs sampler example.
Section \ref{sec-WTV} shows how bounds on the Wasserstein metric may be ``upgraded''
to bounds on the total variation metric in some situations.
Section \ref{sec4.1} reviews the idea of ``one-shot coupling'' \cite{RobRos02} and
presents our key technical result (Theorem \ref{prop-WTVgen}).
Sections~\ref{sec4.2} and \ref{sec4.3} apply this result to our two examples.

\begin{example}[(Normal Gibbs sampler)]
A simple Bayesian estimation problem is the following.
Consider a random sample of size $J$ from the Normal distribution
with mean $\theta$ and variance~$\sigma^2$ (written $N(\theta,\sigma^2)$).
We assume that
$\theta$ and $S:=\sigma^{-2}$ are themselves independent random
variables from
Normal and Gamma prior distributions respectively:
\[
\theta\sim N(\xi,K^{-1})
\quad\mbox{and}\quad
S:=\sigma^{-2} \sim\Gamma(\alpha,\beta).
\]
(Here, $\Gamma(\alpha,\beta)$ is the Gamma distribution with density
$s^{\alpha-1}\beta^{\alpha}\exp(-\beta s)/\Gamma(\alpha)$.)
Let $Y:=Y_1,\ldots,Y_J$ be our random sample from $N(\theta,\sigma^2)$
(conditionally independent, given $\theta$ and $\sigma$).
The joint posterior for $\theta$ and $S$ given $Y$ is
%
\begin{equation}
\label{GS-post}
p(\theta,s|Y) \propto s^{\alpha-1+J/2}
\exp\biggl[-\beta s -\frac{K(\theta-\xi)^2}{2}-\frac{s\sum(Y_j-\theta)^2}{2}
\biggr]
\end{equation}
(where $\sum$ is the sum over $j$ from 1 to $J$).
Besides positive values of $K$, we shall also consider the case $K=0$.
When $K=0$, the prior for $\theta$ is not a probability distribution;
however, the joint posterior \textit{is} a probability distribution. (We
can view
$K=0$ as the ``flat prior'' limit $K\rightarrow0+$.
The case $\beta=0$ is similar.)
The \textit{Gibbs sampler} is the Markov chain $(\theta_t, S_t)$ defined
recursively by drawing $\theta_{t}$ from its conditional distribution
given $Y$ and $S=S_{t-1}$, followed by drawing $S_{t}$ from its conditional
distribution given $Y$ and $\theta=\theta_{t}$:
\begin{eqnarray*}
\theta_{t} & \sim& N \biggl( \frac{S_{t-1}\sum Y_j+K\xi}{S_{t-1}J+K} ,
\frac{1}{S_{t-1}J+K} \biggr) , \\
S_{t} & \sim& \Gamma\biggl( \alpha+\frac{J}{2} ,
\beta+\frac{1}{2}\sum(Y_j-\theta_{t})^2 \biggr) .
\end{eqnarray*}
We can represent this procedure as follows:
%
\begin{eqnarray}
\label{GS.A1th}
\theta_{t} & = & \frac{Z_t}{\sqrt{S_{t-1}J+K}} +
\frac{S_{t-1}\sum Y_j+K\xi}{S_{t-1}J+K} ,
\qquad\mbox{where $Z_t\sim N(0,1)$}, \\
\label{GS.A1}
S_{t} & = & \frac{G_t}{\beta+\frac{1}{2}\sum(Y_j-\theta_{t})^2} ,
\qquad\mbox{where $G_t\sim\Gamma(\alpha+J/2,1)$ }
\end{eqnarray}
(and $\{Z_t\}$ and $\{G_t\}$ are independent i.i.d.\ sequences).
Let
\[
\bar{Y} = \frac{1}{J} \sum_{j=1}^J Y_j
\quad\mbox{and}\quad
\Sigma_0 = \beta+\frac{1}{2}\sum_{j=1}^J(Y_j-\bar{Y})^2
\]
(we treat these as constants, since we always condition on $Y$).
Since
%
\begin{equation}
\label{GS.SS}
\sum_{j=1}^J(Y_j-\theta)^2 = \sum_{j=1}^J(Y_j-\bar{Y})^2 +
J(\bar{Y}-\theta)^2 ,
\end{equation}
we can write equation (\ref{GS.A1}) as
%
\begin{equation}
\label{GS.A2}
S_{t} = \frac{G_t}{\Sigma_0+
({J}/{2})(\bar{Y}-\theta_{t})^2} .
\end{equation}
Using equation (\ref{GS.A1th}), we can express
(\ref{GS.A2}) as a random dynamical system, as follows:
%
\begin{equation}
\label{GS.Ards}
S_{t} = f_{t}(S_{t-1}) ,\qquad t=1,2,\ldots,
\end{equation}
where $f_{t}\dvtx(0,\infty)\rightarrow(0,\infty)$ is the random function
%
\begin{equation}
\label{GS.Afdef}
f_{t}(s) = \frac{G_t}{\Sigma_0+
({J}/{2}) ( {Z_t}/{\sqrt{sJ+K}} +
{(\xi-\bar{Y})K}/({sJ+K}) )^2 }
\end{equation}
with the random variables $G_t$ and $Z_t$ as above. The case $K=0$
is of special interest (representing an improper prior for $\theta$)
and equation (\ref{GS.Afdef}) specializes to
%
\begin{equation}
\label{GS.Afdef0}
f_{t}(s) = \frac{G_t}{\Sigma_0+Z_t^2/(2s) } .
\end{equation}
We note that the posterior (\ref{GS-post}) is a proper probability
distribution when $K=0$, even though the prior is not (to see this, use
(\ref{GS.SS}) and integrate $\theta$ first).

Without loss of generality,
we can assume that $\xi$ is zero and that $K$ is either 0 or 1.
(Indeed, if $K>0$,
then we can let $\tilde{\theta}=(\theta-\xi)\sqrt{K}$,
$\tilde{Y_i}=(Y_i-\xi)\sqrt{K}$, $\tilde{\sigma}^2=K\sigma^2$ and
$\tilde{\beta}=K\beta$; then
$\tilde{Y_i}\sim N(\tilde{\theta},\tilde{\sigma}^2)$, where
$\tilde{\theta}\sim N(0,1)$ and
$\tilde{\sigma}^{-2}\sim\Gamma(\alpha,\tilde{\beta})$.)
Accordingly, for our Markov chain $\{S_t\}$ with $K\in\{0,1\}$,
let $P_K$ be the chain's transition probability kernel,
let $p_K(\cdot,\cdot)$ be
the density of $P_K$ and let $\pi_K$ be the stationary distribution.

We shall obtain quantitative bounds for the convergence of
our Gibbs sampler chain $P_K$ ($K\in\{0,1\}$); see Propositions
\ref{prop-K1bd} and \ref{prop-GSTVW}, and the
discussions of numerical results following each.
Roberts and Rosenthal \cite{RobRos02} analyzed this chain with
flat priors, that is,\ $K=\xi=\beta=0$ and $\alpha=1$. In particular,
their results show that
$\limsup_{n\rightarrow\infty} [d_{\mathrm{TV}}(P_0^n(x,\cdot),\pi_0) ]^{1/n}
\leq1/J$. This would equal our asymptotic rate if we could replace
$w$ by 1
in Proposition \ref{prop-GSTVW}. The analysis of \cite{RobRos02}
uses the property
that the recursion for $1/S_t$ is a linear function of $1/S_{t-1}$,
which only holds when $K=0$.
Their approach cannot handle the case $K>0$. Our method
of Section 4 may be viewed as a more powerful (nonlinear) generalization
of \cite{RobRos02}.
\end{example}

\begin{example}[(Random logistic map)]
We consider the i.i.d.\ random maps $f_1,f_2,\ldots$ on $[0,1]$
defined by
\[
f_i(x) = 4 B_i x(1-x),
\]
where $B_1,B_2,\ldots$ are i.i.d. random variables having the
$\operatorname{Beta}(a+\frac{1}{2},a-\frac{1}{2})$ distribution.
Here, $a>\frac{1}{2}$ is a fixed number.
It is known
that the $\operatorname{Beta}(a,a)$ distribution is the unique stationary distribution for
this iterated function system \cite{ChaLet}. Our result for this
example will
provide bounds that are more qualitative than quantitative.
Asymptotic convergence properties of this example have been studied in the
literature. Steinsaltz \cite{Steinsaltz}
showed that the system is locally contractive if $a\geq2$
and hence that the corresponding Markov chain converges to equilibrium
exponentially rapidly in the Wasserstein distance.
Using the techniques of Section 4, we shall prove the following theorem.
\end{example}

\begin{theorem}
\label{thm-logconv}
Assume that $a>1/2$ and let $x\in(0,1)$.
There then exists a constant $\tilde{C}_{a}$, depending only on $a$,
such that
\[
d_{\mathrm{TV}}(\tilde{F}_n(x),\beta_{a,a}) \leq \tilde{C}_{a}
[ d_{\mathrm{W}}(\tilde{F}_{n-1}(x),\beta_{a,a}) ]^{a/(a+1)}\qquad
\mbox{for all $n\geq1$}
\]
(where $\beta_{a,a}$ is a random variable having the
$\operatorname{Beta}(a,a)$ distribution).
\end{theorem}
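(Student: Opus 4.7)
The plan is to invoke Theorem~\ref{prop-WTVgen} (the one-shot coupling/Wasserstein-to-TV result of Section~\ref{sec4.1}) and reduce the problem to an explicit estimate of the one-step total variation modulus of the random logistic kernel. Concretely, take an optimal Wasserstein coupling $(X_{n-1},Y_{n-1})$, where $X_{n-1}$ has the law of $\tilde{F}_{n-1}(x)$ and $Y_{n-1}$ has the law of $\beta_{a,a}$, so that $E|X_{n-1}-Y_{n-1}|=d_{\mathrm{W}}(\tilde{F}_{n-1}(x),\beta_{a,a})$. Apply one more step using a maximal coupling of the transition kernels $P(X_{n-1},\cdot)$ and $P(Y_{n-1},\cdot)$ conditional on $(X_{n-1},Y_{n-1})$. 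This yields
\[
d_{\mathrm{TV}}(\tilde{F}_n(x),\beta_{a,a})\leq E\bigl[d_{\mathrm{TV}}(P(X_{n-1},\cdot),P(Y_{n-1},\cdot))\bigr].
\]

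The main technical step is to bound the integrand on the right-hand side. Since $P(x,\cdot)$ is the law of $4x(1-x)B$ with $B\sim\operatorname{Beta}(a+\frac{1}{2},a-\frac{1}{2})$, a simple scaling argument reduces the problem to estimating $d_{\mathrm{TV}}(B,\lambda B)$, where $\lambda=\max(u,v)/\min(u,v)$ with $u=4x(1-x)$ and $v=4y(1-y)$. I would integrate the Beta density directly, paying particular attention to the singularity at $b=1$ that appears when $a<3/2$; this gives a bound of the form $d_{\mathrm{TV}}(B,\lambda B)\leq C_a(\lambda-1)^{\min(1,\,a-1/2)}$, which translates into a pointwise modulus in terms of $|x-y|$ carrying a weight $1/(x(1-x))$.

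Next I would take expectations against the optimal coupling and optimize. Since the stationary density of $\beta_{a,a}$ is proportional to $(y(1-y))^{a-1}$, the weight $(y(1-y))^{-\gamma}$ is integrable against this density exactly when $\gamma<a$. A Markov-type splitting at a threshold $\epsilon$---so that $P(|X_{n-1}-Y_{n-1}|>\epsilon)\leq d_{\mathrm{W}}(\tilde{F}_{n-1}(x),\beta_{a,a})/\epsilon$ on one side, while the pointwise modulus controls the other---combined with this critical integrability threshold produces the advertised exponent $a/(a+1)$ after optimizing $\epsilon$.

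The hardest step is making the argument work uniformly across the regimes $1/2<a<1$, $1\leq a<3/2$, and $a\geq 3/2$: the Beta density at $b=1$ is qualitatively different in these cases (diverging versus bounded), and the stationary density $\beta_{a,a}$ is itself singular at the endpoints $\{0,1\}$ when $a<1$. The exponent $a/(a+1)$ arises precisely from the integrability threshold $\gamma<a$ for the boundary weight, so the core task is translating the modulus-and-weight estimate cleanly into the form required by Theorem~\ref{prop-WTVgen}.
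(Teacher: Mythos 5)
Your overall framework coincides with the paper's: run a one-shot coupling to reduce the problem to the one-step TV distance between transition kernels (Lemma~\ref{lem-wasser} and Theorem~\ref{prop-WTVgen}), reformulate the one-step TV as $d_{\mathrm{TV}}(B,\lambda B)$ by the scaling $f(x)=Q(x)B$, and then trade a pointwise modulus against the $\operatorname{Beta}(a,a)$ tail near the endpoints via an $\epsilon$-split. The genuine gap is in the last step, where you claim that the exponent $a/(a+1)$ ``arises after optimizing $\epsilon$''. With the ingredients as you set them up this does not happen. If the one-step modulus really carries a fractional exponent $\gamma=\min(1,a-1/2)$ and the boundary weight satisfies $\pi_a(\{Q<\epsilon\})\lesssim\epsilon^{q}$ with $q=a$ (this is your integrability threshold $\gamma<a$ and the paper's computation around equations (\ref{eq-logint1})--(\ref{eq-logint2})), then the $\epsilon$-split, after using Jensen to pull the $\gamma$-th power outside the expectation over the optimal coupling, yields
\[
d_{\mathrm{TV}} \;\lesssim\; \Bigl(\tfrac{d_{\mathrm{W}}}{\epsilon}\Bigr)^{\gamma} + B\epsilon^{q},
\]
whose optimal value scales like $d_{\mathrm{W}}^{\gamma q/(\gamma+q)}$. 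With $\gamma=a-1/2$ and $q=a$ this gives exponent $a(a-1/2)/(2a-1/2)$, which is strictly smaller than $a/(a+1)$ on the entire range $1/2<a<3/2$. So the key claim of your final paragraph is unproved, and the argument does not actually deliver the advertised exponent for $a<3/2$.

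There is a further issue you should be aware of: the paper obtains $a/(a+1)$ precisely because Lemma~\ref{lem-logwass} asserts a \emph{linear} one-step modulus $\frac{1}{2}\int|p(x,z)-p(y,z)|\,\mathrm{d}z\leq 8a|x-y|/\max\{Q(x),Q(y)\}$ for every $a>1/2$, which is exactly what Theorem~\ref{prop-WTVgen}(b) with $q=a$ consumes. Your fractional bound is in direct tension with that, and your instinct here is actually on firm ground: the linearization step in the proof of Lemma~\ref{lem-logwass} (replacing $(Q(y)-z)^{a-3/2}$ by $(Q(x)-z)^{a-3/2}$ as an upper bound) uses monotonicity of $t\mapsto t^{a-3/2}$, which needs $a\geq 3/2$; and the crude lower bound $d_{\mathrm{TV}}(Q(x)B,Q(y)B)\geq\Pr\{B>Q(x)/Q(y)\}\asymp\bigl(1-Q(x)/Q(y)\bigr)^{a-1/2}$ confirms the modulus really is fractional when $1/2<a<3/2$. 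In other words, your proposal correctly identifies a delicate regime, but rather than producing a proof of the theorem as stated, it shows that a pointwise worst-case modulus plus Jensen plus $\epsilon$-optimization cannot reach $a/(a+1)$ there; you would need either a sharper mechanism (one that does not discard the coupling structure to a worst-case modulus) or to accept the weaker exponent $a(a-1/2)/(2a-1/2)$ in that range.
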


Note that Theorem \ref{thm-logconv} does not assume local
contractivity (indeed,
local contractivity fails if $1/2<a<1$, by Corollary 3 of \cite{Stein2}
and Theorem 1 of \cite{Steinsaltz}).

Theorem \ref{thm-logconv} implies the following.
Assume that the random logistic Markov chain
$\{\tilde{F}_n(x) \dvtx n=0,1,\ldots\}$
converges to its equilibrium exponentially rapidly in Wasserstein distance,
that is, that\ there exists a constant $\rho\in(0,1)$ such that
%
\begin{equation}
\label{eq-LWexp}
\limsup_{n\rightarrow\infty}
[ d_{\mathrm{W}}(\tilde{F}_n(x),\beta_{a,a}) ]^{1/n} \leq \rho.
\end{equation}
It then also converges exponentially rapidly in TV distance, perhaps
at a modestly slower rate:
\[
\limsup_{n\rightarrow\infty} [
d_{\mathrm{TV}}(\tilde{F}_n(x),\beta_{a,a}) ]^{1/n} \leq
\rho^{a/(a+1)} < 1.
\]
Since the state space $(0,1)$ has diameter 1, we trivially have
$d_{\mathrm{W}}(\tilde{F}_n(x),\beta_{a,a}) \leq
d_{\mathrm{TV}}(\tilde{F}_n(x),\beta_{a,a})$. Hence, we conclude that
for $a>1/2$, our random logistic Markov chain converges to the
equilibrium exponentially rapidly in Wasserstein distance
\textit{if and only if} it converges exponentially rapidly in
TV distance.

\section{Wasserstein and total variation metrics}
\label{sec-metrics}
In this section, we review the definitions and some properties of two
metrics on the space of probability measures: the Wasserstein metric
and the total variation (TV) metric. For a broader review of metrics
on probabilities, see \cite{GibbsSu}.

Let $(\chi,\rho)$ be a complete separable metric space.
Consider two probability measures, $\mu_1$ and $\mu_2$,
on $\chi$. Let $\operatorname{Joint}(\mu_1,\mu_2)$ denote the set of all
probability measures $M$ on $\chi\times\chi$ whose marginal distributions
are $\mu_1$ and $\mu_2$, that is,
\[
\mu_1(\mathrm{d}x) = \int_yM(\mathrm{d}x,\mathrm{d}y)
\quad\mbox{and}\quad
\mu_2(\mathrm{d}y) = \int_xM(\mathrm{d}x,\mathrm{d}y) .
\]
In other words, if two random variables $X_1$ and $X_2$ have distributions
$\mu_1$ and $\mu_2$, respectively, then
$\operatorname{Joint}(\mu_1,\mu_2)$ is the set of all ``couplings'' of $X_1$ and $X_2$.

The \textit{Wasserstein distance} between $\mu_1$ and $\mu_2$, denoted
$d_{\mathrm{W}}(\mu_1,\mu_2)$, is defined to be
%
\begin{equation}
d_{\mathrm{W}}(\mu_1,\mu_2) =
\inf\biggl\{ \int_{\chi}\int_{\chi}\rho(x,y) M(\mathrm{d}x,\mathrm{d}y)
\dvtx M\in\operatorname{Joint}(\mu_1,\mu_2) \biggr\}.
\end{equation}
In other words, $d_{\mathrm{W}}(\mu_1,\mu_2)$ is the infimum of $E(\rho(X_1,X_2))$
over all couplings of $X_1$ and $X_2$ (where $X_i\sim\mu_i$).
It can be shown that there exists an $M$ that attains the infimum
(see, for example, Section 5.1 of \cite{Chen}).

The \textit{total variation (TV) distance} between $\mu_1$ and $\mu_2$, denoted
$d_{\mathrm{TV}}(\mu_1,\mu_2)$, is defined to be
%
\begin{equation}\label{eq-TVdef}
d_{\mathrm{TV}}(\mu_1,\mu_2) =
\sup\{ |\mu_1(A)-\mu_2(A)| \dvtx A\subset\chi\}.
\end{equation}
This sup is attained by some set $A$ (by
the classical Hahn decomposition for the signed measure $\mu_1-\mu_2$).
An equivalent definition of $d_{\mathrm{TV}}$ is
%
\begin{equation}\label{eq-TVcouprep}
d_{\mathrm{TV}}(\mu_1,\mu_2) =
\inf\bigl\{ M\bigl(\{(x,y)\dvtx x\neq y\}\bigr) \dvtx M\in\operatorname{Joint}(\mu_1,\mu_2)\bigr\}.
\end{equation}
In other words, $d_{\mathrm{TV}}(\mu_1,\mu_2)$ is the infimum of $\Pr\{X_1\neq
X_2\}$
over all couplings of $X_1$ and $X_2$ (where $X_i\sim\mu_i$).
For convenience, we shall sometimes talk about the Wasserstein or TV distance
between two random variables, which means the same thing as the
Wasserstein or TV distance between their distributions.

The following is relatively well known (see, for example, Theorem 5.7
of \cite{Chen}
or Proposition 3 of \cite{RobRos04}).

\begin{proposition}\label{prop-Wbasic}
Assume that $\mu_1$ and $\mu_2$ are probability measures on $\chi$,
having density functions~$p_1$ and $\rho_2$, respectively, with respect
to a common reference measure $\lambda$. Then
%
\begin{eqnarray}
\label{eq-TVpdf}
d_{\mathrm{TV}}(\mu_1,\mu_2) & = & \frac{1}{2}\int_{\chi}|p_1(z)-p_2(z)|
\lambda(\mathrm{d}z) \\
\label{eq-TVpdf1}
& = & \int_{z\dvtx p_1(z)>p_2(z)} \bigl(p_1(z)-p_2(z)\bigr) \lambda(\mathrm{d}z) \\
\label{eq-TVpdf2}
& = & 1 - \int_{\chi}\min\{p_1(z),p_2(z)\} \lambda(\mathrm{d}z) .
\end{eqnarray}
\end{proposition}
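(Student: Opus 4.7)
The plan is to work directly from the supremum definition (\ref{eq-TVdef}) of $d_{\mathrm{TV}}$, rewrite $\mu_1(A) - \mu_2(A)$ as a density integral, and identify the optimizing set; formulas (\ref{eq-TVpdf1}), (\ref{eq-TVpdf}), (\ref{eq-TVpdf2}) then follow from simple manipulations.

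First, I would fix the candidate optimizer. Set $A^{+} = \{z \in \chi : p_1(z) > p_2(z)\}$. For any measurable $A \subset \chi$, write
\[
\mu_1(A) - \mu_2(A) = \int_A \bigl(p_1(z) - p_2(z)\bigr)\,\lambda(\mathrm{d}z).
\]
The right-hand side is maximized over $A$ precisely by collecting all points where the integrand is positive, i.e., by $A = A^{+}$; this is just the Hahn decomposition of the signed measure $\mu_1 - \mu_2$, as the paper notes in the paragraph after (\ref{eq-TVdef}). Similarly, $\mu_2(A) - \mu_1(A)$ is maximized by the complement $(A^{+})^{c}$ (up to a $\lambda$-null set where $p_1 = p_2$). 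Taking the larger of these two maxima (they will turn out to be equal) yields
\[
d_{\mathrm{TV}}(\mu_1,\mu_2) \;=\; \int_{A^{+}} \bigl(p_1(z) - p_2(z)\bigr)\,\lambda(\mathrm{d}z),
\]
which is exactly (\ref{eq-TVpdf1}).

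Next I would derive (\ref{eq-TVpdf}) from (\ref{eq-TVpdf1}). Since $p_1$ and $p_2$ are probability densities with respect to $\lambda$, $\int_\chi (p_1 - p_2)\,\mathrm{d}\lambda = 0$, so
\[
\int_{A^{+}} (p_1 - p_2)\,\mathrm{d}\lambda \;=\; -\int_{(A^{+})^{c}} (p_1 - p_2)\,\mathrm{d}\lambda \;=\; \int_{(A^{+})^{c}} (p_2 - p_1)\,\mathrm{d}\lambda.
\]
In particular both integrals equal $\tfrac{1}{2}\int_\chi |p_1 - p_2|\,\mathrm{d}\lambda$, which confirms that the two candidate optimizers above give the same value and establishes (\ref{eq-TVpdf}). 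Finally, (\ref{eq-TVpdf2}) follows from the pointwise identity $|a-b| = a + b - 2\min(a,b)$ valid for $a,b \geq 0$: integrating gives
\[
\tfrac{1}{2}\int_\chi |p_1 - p_2|\,\mathrm{d}\lambda \;=\; \tfrac{1}{2}\Bigl(1 + 1 - 2\int_\chi \min\{p_1,p_2\}\,\mathrm{d}\lambda\Bigr) \;=\; 1 - \int_\chi \min\{p_1(z),p_2(z)\}\,\lambda(\mathrm{d}z).
\]

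Honestly, there is no real obstacle here: the result is a classical identity and the argument is essentially bookkeeping once one observes that the sup in (\ref{eq-TVdef}) is attained at the positive set of the signed measure $\mu_1 - \mu_2$. The only point requiring mild care is that $A^{+}$ is only determined up to a $\lambda$-null set (values where $p_1 = p_2$ can be placed arbitrarily), but this does not affect any of the integrals since the integrand $p_1 - p_2$ vanishes there.
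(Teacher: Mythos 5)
Your proof is correct and complete: the identification of the optimizing set $A^{+}=\{p_1>p_2\}$ via the Hahn decomposition, the cancellation $\int_\chi(p_1-p_2)\,\mathrm{d}\lambda=0$ linking (\ref{eq-TVpdf1}) to (\ref{eq-TVpdf}), and the pointwise identity $|a-b|=a+b-2\min(a,b)$ for (\ref{eq-TVpdf2}) are exactly the standard argument. The paper itself gives no proof of this proposition — it cites it as well known (Theorem 5.7 of \cite{Chen}, Proposition 3 of \cite{RobRos04}) — and your write-up is precisely the argument those references supply, so there is nothing to compare or correct.
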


If the state space $\chi$ is bounded, then
$d_{\mathrm{W}}(\mu_1,\mu_2) \leq d_{\mathrm{TV}}(\mu_1,\mu_2) \times
[\sup\{\rho(x,y)\dvtx x,y\in\chi\}]$
and, in particular, TV convergence implies Wasserstein convergence.
However, in general, neither convergence implies the other. For example,
in $\mathbb{R}$, let $\mu_n$ be the two-point probability distribution
that has
$\mu_n(\{0\})=1-n^{-1}$ and $\mu_n(\{n\})=n^{-1}$. Then $\mu_n$
converges to
the point mass at 0 in the TV metric, but not in Wasserstein.
Also, let $\nu_n$ be the probability distribution on $[0,1]$ with density
$1+\sin(2\uppi nx)$; then $\nu_n$ converges to the uniform distribution
on $[0,1]$ in Wasserstein, but not in TV.

The following result will be very useful in Section \ref{sec-WTV}.

\begin{lemma}\label{lem-gABC}
Consider a deterministic measurable function $g \dvtx A\times B\rightarrow C$.
Let $W_1$ and $W_2$ be two $B$-valued random variables and let
$U$ be an $A$-valued random variable that is independent of both $W_i$'s.
Define the $C$-valued random variables $X_1$ and $X_2$ by
$X_i=g(U,W_i)$, $i=1,2$. Then
\[
d_{\mathrm{TV}}(X_1,X_2) \leq d_{\mathrm{TV}}(W_1,W_2) .
\]
\end{lemma}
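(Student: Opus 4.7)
The plan is to use the coupling representation of total variation distance given in equation (\ref{eq-TVcouprep}): it suffices to exhibit a coupling $(X_1^\ast,X_2^\ast)$ of $X_1$ and $X_2$ for which $\Pr\{X_1^\ast\neq X_2^\ast\}\leq d_{\mathrm{TV}}(W_1,W_2)$, and then the infimum definition gives the conclusion. The idea is to ``reuse'' the same $U$ on both sides and couple only the $W_i$'s optimally.

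More concretely, first I would choose a coupling $M\in\operatorname{Joint}(W_1,W_2)$ that attains (or comes within $\epsilon$ of) the infimum in (\ref{eq-TVcouprep}), so that if $(W_1^\ast,W_2^\ast)$ has law $M$, then $\Pr\{W_1^\ast\neq W_2^\ast\}=d_{\mathrm{TV}}(W_1,W_2)$ (or $\leq d_{\mathrm{TV}}(W_1,W_2)+\epsilon$ in the approximate case). Next, on a possibly enlarged probability space, let $U^\ast$ be an $A$-valued random variable with the same law as $U$ and chosen independently of the pair $(W_1^\ast,W_2^\ast)$. Then define $X_i^\ast := g(U^\ast,W_i^\ast)$ for $i=1,2$.

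The key step is to verify that $(X_1^\ast,X_2^\ast)$ is indeed a coupling of $X_1$ and $X_2$. For this, note that $U^\ast$ is independent of $W_i^\ast$ for each $i$ separately (because $U^\ast$ is independent of the joint pair), and marginally $U^\ast\stackrel{d}{=}U$ and $W_i^\ast\stackrel{d}{=}W_i$; since $U$ is assumed independent of $W_i$, it follows that $(U^\ast,W_i^\ast)\stackrel{d}{=}(U,W_i)$, and consequently $X_i^\ast=g(U^\ast,W_i^\ast)\stackrel{d}{=}g(U,W_i)=X_i$. Now the decisive observation is that whenever $W_1^\ast=W_2^\ast$, one has $g(U^\ast,W_1^\ast)=g(U^\ast,W_2^\ast)$, hence $X_1^\ast=X_2^\ast$. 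Therefore
\[
\Pr\{X_1^\ast\neq X_2^\ast\}\ \leq\ \Pr\{W_1^\ast\neq W_2^\ast\}\ =\ d_{\mathrm{TV}}(W_1,W_2),
\]
and (\ref{eq-TVcouprep}) gives $d_{\mathrm{TV}}(X_1,X_2)\leq d_{\mathrm{TV}}(W_1,W_2)$.

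There is no deep obstacle here; the only point requiring care is the marginal-identification step above, where it is essential that the hypothesis provides independence of $U$ from each $W_i$ individually (and not merely joint independence, which is not assumed). If one prefers to avoid appealing to attainment of the TV infimum, one can instead work with $\epsilon$-optimal couplings and let $\epsilon\downarrow 0$; attainment for TV is easy anyway via the Hahn decomposition, so either version is routine.
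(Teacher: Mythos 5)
Your proof is correct and takes essentially the same route as the paper: build an optimal TV coupling $(\tilde W_1,\tilde W_2)$ of $W_1,W_2$, adjoin an independent copy of $U$, push forward through $g$, check the marginals, and observe that $\tilde W_1=\tilde W_2$ forces $\tilde X_1=\tilde X_2$. The paper compresses the marginal-identification step you spell out, but there is no substantive difference.
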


\begin{pf}
Choose a joint distribution $M(\mathrm{d}w_1,\mathrm{d}w_2)$ of
a random vector $(\tilde{W}_1,\tilde{W_2})$ on $B\times B$ such that
$\tilde{W}_i \stackrel{d}{=}W_i$ for $i=1,2$ and
$M\{ \tilde{W}_1\neq\tilde{W}_2 \} = d_{\mathrm{TV}}(W_1,W_2)$.
Also, make $(\tilde{W}_1,\tilde{W_2})$ independent of $U$ and let
$\tilde{X}_i = g(U,\tilde{W}_i)$. Then $\tilde{X}_i \stackrel{d}{=}X_i$
for $i=1,2$, so
\[
d_{\mathrm{TV}}(X_1,X_2) \leq M\{ \tilde{X}_1\neq\tilde{X}_2 \}
\leq M\{ \tilde{W}_1\neq\tilde{W}_2 \} = d_{\mathrm{TV}}(W_1,W_2)
.
\]
\upqed
\end{pf}

\section{Convergence in the Wasserstein metric}\label{sec3}
\subsection{Local contractivity condition and
a convergence theorem} \label{subsec-contrac}
Our main tool to obtain quantitative bounds
for convergence in Wasserstein metric will be Steinsaltz's local
contractivity convergence theorem \cite{Steinsaltz}. Below, we
review this result in a form convenient for us.

\begin{definition}
An iterated function system is locally contractive if there exists
a function $\phi\dvtx\mathcal{X}\mapsto[1,\infty)$ and
$r\in(0,1)$ such that
\[
G_{n}(x) := E[D_x F_{n}] \leq \phi(x)r^n\qquad
\mbox{for all $n\geq1$},
\]
where $D_xf:=\limsup_{y\rightarrow x}\frac{\rho(f(x),f(y))}{\rho(x,y)}$.
If this holds, then $\phi$ is called a \textup{drift function}.
\end{definition}
\begin{theorem}\label{theo:steinsaltz}
If an iterated function system is locally contractive with a drift
function $\phi$ and if
\[
C_{x}:=E \Bigl[\rho(f(x),x)\sup_{0\leq t\leq1}\bigl\{\phi\bigl(x+t\bigl(f(x)-x\bigr)\bigr)\bigr\} \Bigr]
< \infty,
\]
then the system is attractive (in particular, $F_{\infty}(x)$ is independent
of $x$) and
\[
d_{\mathrm{W}}(F_{n}(x),F_\infty(x)) \leq
E\rho(F_{n}(x),F_\infty(x)) \leq
\frac{C_{x}r^n}{1-r} \qquad\mbox{for every $x\in\chi$}.
\]
%
\end{theorem}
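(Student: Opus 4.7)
The plan is to establish the existence of the limit $F_\infty(x)$ by showing that $\{F_n(x)\}_{n\geq 1}$ is Cauchy in $L^1$ with the stated geometric rate, and then to verify attractivity. The key structural observation is that for the backward iterates, $F_{n+1}(x) = F_n(f_{n+1}(x))$, so that $F_{n+1}(x)$ and $F_n(x)$ arise from evaluating the same random map $F_n$ at the two points $x$ and $f_{n+1}(x)$, with $F_n$ independent of $f_{n+1}$.

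My first step would be to derive a path-integral inequality bounding the displacement of $F_n$ in terms of its local Lipschitz quantity. In the setting implicit in the statement of $C_x$, where $x+t(f(x)-x)$ makes sense, the natural bound is
\[
\rho\bigl(F_n(x), F_n(y)\bigr) \;\leq\; \rho(x,y) \int_0^1 D_{x+t(y-x)} F_n \, \mathrm{d}t .
\]
Granting this, I would set $y = f_{n+1}(x)$, take expectations, condition on $f_{n+1}$, and apply the drift hypothesis $E[D_z F_n] \leq \phi(z) r^n$. Bounding the integral by the supremum of $\phi$ along the segment yields the one-step bound
\[
E\rho\bigl(F_n(x), F_{n+1}(x)\bigr) \;\leq\; r^n \, E\!\left[\rho\bigl(x,f_{n+1}(x)\bigr)\sup_{0\leq t\leq 1}\phi\bigl(x+t(f_{n+1}(x)-x)\bigr)\right] = C_x r^n .
\]

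Next, a telescoping application of the triangle inequality gives, for $m>n$,
\[
E\rho\bigl(F_n(x), F_m(x)\bigr) \;\leq\; \sum_{k=n}^{m-1} C_x r^k \;\leq\; \frac{C_x r^n}{1-r} .
\]
This shows that $\{F_n(x)\}$ is Cauchy in $L^1(\rho)$, hence converges (after passing to a subsequence for the a.s.\ statement) to some random element $F_\infty(x)$, and Fatou's lemma applied as $m\to\infty$ upgrades the Cauchy bound to the desired $E\rho(F_n(x),F_\infty(x)) \leq C_x r^n/(1-r)$. The inequality $d_{\mathrm{W}}(F_n(x),F_\infty(x)) \leq E\rho(F_n(x),F_\infty(x))$ is automatic, since $(F_n(x),F_\infty(x))$ is a coupling of their marginal distributions. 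For attractivity, the same path-integral device applied directly to two starting points $x,y$ gives $E\rho(F_n(x),F_n(y)) \leq \rho(x,y) r^n \int_0^1 \phi(x+t(y-x))\,\mathrm{d}t$, which vanishes as $n\to\infty$ provided $\phi$ is bounded along the segment, so the a.s.\ limit is independent of $x$.

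The main obstacle I anticipate is rigorously justifying the path-integral inequality from the $\limsup$ definition of $D_x F_n$. One needs $t\mapsto F_n(x+t(y-x))$ to behave like an absolutely continuous curve whose metric derivative is controlled pointwise by $D_{x+t(y-x)}F_n$, together with joint measurability of the integrand in $t$ and $\omega$. This is the technical core of Steinsaltz's argument and is where care is needed; the telescoping and final Fatou step are then routine.
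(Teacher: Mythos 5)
The paper does not prove this theorem; it states it (in a form convenient to the authors) and attributes it to Steinsaltz \cite{Steinsaltz}, so there is no in-paper proof to compare against. Your reconstruction has the right skeleton and matches the structure of Steinsaltz's argument: exploit the backward-iterate identity $F_{n+1}(x) = F_n(f_{n+1}(x))$ together with the independence of $F_n$ from $f_{n+1}$; bound $\rho(F_n(x),F_n(y))$ by integrating the local Lipschitz quantity $D_z F_n$ along the segment $[x,y]$; condition on $f_{n+1}$ and apply the drift bound $E[D_z F_n]\le\phi(z)r^n$ to get $E\rho(F_n(x),F_{n+1}(x))\le C_x r^n$; telescope; and finish with Fatou. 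You also correctly isolate the technical core, namely justifying
\[
\rho\bigl(F_n(x),F_n(y)\bigr) \le \rho(x,y)\int_0^1 D_{x+t(y-x)}F_n\,\mathrm{d}t
\]
from the $\limsup$ definition of $D_z$, which requires absolute continuity along segments and joint measurability in $(t,\omega)$; that is where Steinsaltz's original proof does the careful work. Two small tidy-ups: (1) since $\sum_n E\rho(F_n(x),F_{n+1}(x))\le C_x/(1-r)<\infty$, Tonelli gives $\sum_n\rho(F_n(x),F_{n+1}(x))<\infty$ a.s., so $\{F_n(x)\}$ is a.s.\ Cauchy and converges a.s.\ without passing to a subsequence; (2) the attractivity step needs $\int_0^1\phi(x+t(y-x))\,\mathrm{d}t<\infty$, which is an extra regularity on $\phi$ (Steinsaltz's growth-condition setup uses continuous $\phi$, so this is automatic there, but it is worth flagging as you did).
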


Steinsaltz \cite{Steinsaltz} also gives a sufficient condition,
called the \textit{growth condition}, for a function $\phi$ to be a drift
function:
a continuous function $\phi\dvtx\mathcal{X}\mapsto[1,\infty)$ is a
drift function if $r<1$, where
\[
r := \sup_{x}E \biggl[\frac{\phi(f(x))}{\phi(x)}D_xf \biggr].
\]
Here is a short argument (different from the original proof in \cite
{Steinsaltz})
to explain why. Let $\mathcal{L}$ be the positive linear operator which
maps a generic function $g$ to the function
$\mathcal{L}(g)(x)=E[g(f(x))D_xf]$. Then
$G_{n}(x)=\mathcal{L}^{n}(1)(x)$, with $1$ here being the constant
function equal to $1$. Note that the
growth condition is equivalent to $\mathcal{L}\phi\leq r\phi$.
We will refer to any $\phi>0$ satisfying $\mathcal{L}\phi\leq r\phi$ as an
$r$\textit{-sub-eigenfunction} for $\mathcal{L}$. Now, if $\phi\geq1$ and
$\phi$ is an $r$-sub-eigenfunction, then $G_n(x)=\mathcal{L}^{n}1\leq
\mathcal{L}^{n}\phi\leq r^n\phi$ and hence $\phi$ is a drift function
with rate~$r$.

We note that Proposition 8 of \cite{Stein2} shows that the existence of a
$\phi$ satisfying the growth condition is also necessary for local
contractivity.

\subsection{How to apply the local contractivity convergence theorem:
Finding a drift function}\label{sec3.2}

Applying Steinsaltz's local contractivity convergence theorem to a
specific problem would be easy if one knew how to write down a
drift function. Here, we will propose two practical strategies that
can help us to do this.

The first strategy is to find a linear operator
$\tilde{\mathcal{L}}$ that dominates $\mathcal{L}$ and is simpler
to manage. If $\phi$ is an $r$-sub-eigenfunction for $\tilde{\mathcal{L}}$,
then it is an $r$-sub-eigenfunction for $\mathcal{L}$ as well.

One kind of operator that we can manage is defined as follows: let
$\{A_{i}\}_{i=1}^{n}$ be a finite partition of the state space $\chi$
and let
%
\begin{equation}\label{eq:form}
\tilde{\mathcal{L}}\phi(x)=b(x)\sum_{i=1}^{n}1_{A_i}(x)
\int_{\chi}\phi(s)\mu_i(\mathrm{d}s) ,
\end{equation}
where $b(x)$ is a positive function and each $\mu_i$ is a non-zero
finite measure on $\mathcal{\chi}$.

\begin{theorem} \label{theo:subeigen}
Let $\tilde{\mathcal{L}}$ be an operator of the form
(\ref{eq:form}). In order for $\tilde{\mathcal{L}}$ to have an
r-sub-eigenfunction, it is necessary and sufficient that the matrix
\[
Q(i,j)=\int_{A_{j}}b(x)\mu_i(\mathrm{d}x)
\]
has an $r$-sub-eigenvector $p=(p_1,p_2,\ldots,p_n)^t$, that is, $p_i>0$
$\forall i$ and $Qp\leq rp$. Moreover, if $p$ is an
$r$-sub-eigenvector for $Q$, then the function
%
\begin{equation}
\phi(x)=\sum_{j=1}^{n}p_j 1_{A_j}(x)b(x) \label{eq:subeigen}
\end{equation}
(and any positive multiple of it) is an $r$-sub-eigenfunction for
$\tilde{\mathcal{L}}$.
\end{theorem}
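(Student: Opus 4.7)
The plan is to exploit the special ``rank-$n$'' structure of $\tilde{\mathcal{L}}$: for $x\in A_i$, the value $\tilde{\mathcal{L}}\phi(x)$ depends on $\phi$ only through the scalar $\int_\chi\phi\,\mathrm{d}\mu_i$. Thus the search for a positive sub-eigenfunction really only involves $n$ numbers, and it is natural to expect a finite-dimensional reduction to the matrix $Q$. I would prove the two implications separately.

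For the sufficiency (``if'') direction, I would start from an $r$-sub-eigenvector $p$ of $Q$, define $\phi$ by formula (\ref{eq:subeigen}), and verify the sub-eigenfunction inequality by direct substitution. The key computation is that, for $x\in A_i$,
\[
\tilde{\mathcal{L}}\phi(x) = b(x)\int_\chi\phi(s)\mu_i(\mathrm{d}s)
= b(x)\sum_{j=1}^n p_j\int_{A_j}b(s)\mu_i(\mathrm{d}s)
= b(x)(Qp)_i \leq r\,b(x)p_i = r\phi(x),
\]
where the last equality uses that $\phi(x)=p_i b(x)$ on $A_i$. Positivity of $\phi$ is immediate since $b>0$ and $p_j>0$.

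For the necessity (``only if'') direction, given any positive $r$-sub-eigenfunction $\phi$ of $\tilde{\mathcal{L}}$, I would set $p_i := \int_\chi\phi\,\mathrm{d}\mu_i$. Each $p_i$ is strictly positive and finite because $\phi>0$, $\mu_i$ is a non-zero finite measure, and $\tilde{\mathcal{L}}\phi$ is well-defined. Pointwise on $A_i$, the sub-eigenfunction inequality reads $b(x)p_i \leq r\phi(x)$, equivalently $\phi(x)\geq (p_i/r)\,b(x)$ on $A_i$. Integrating this lower bound piece by piece against $\mu_i$ yields
\[
p_i = \sum_{j=1}^n \int_{A_j}\phi(x)\mu_i(\mathrm{d}x)
\geq \frac{1}{r}\sum_{j=1}^n p_j\int_{A_j}b(x)\mu_i(\mathrm{d}x)
= \frac{1}{r}(Qp)_i,
\]
so $Qp\leq rp$, as required. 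I do not expect a serious obstacle: the argument is essentially algebraic. The only delicate points are technical measurability/integrability issues (ensuring $p_i<\infty$ and that the pointwise inequality on each $A_i$ may be integrated against $\mu_i$), both of which are automatic under the stated assumptions that $b>0$, the $\mu_i$ are finite, and $\phi$ makes $\tilde{\mathcal{L}}\phi$ a well-defined finite function.
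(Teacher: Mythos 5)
Your proof is correct and follows essentially the same route as the paper's: sufficiency by direct substitution of formula~(\ref{eq:subeigen}), and necessity by setting $p_i=\int_\chi\phi\,\mathrm{d}\mu_i$ and integrating the pointwise inequality $\tilde{\mathcal{L}}\phi\leq r\phi$ against each $\mu_i$. Your rearrangement in the necessity step (rewriting the pointwise inequality as a lower bound on $\phi$ on each $A_j$ before integrating) is an algebraically equivalent repackaging of the paper's direct integration of both sides against $\mu_i$.
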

\begin{pf}
If $\phi$ is an $r$-sub-eigenfunction of
$\tilde{\mathcal{L}}$, then $b(x)\sum_{j=1}^{n}1_{A_j}(x)\int\phi(\mathrm{d}c)\mu
_{j}(c)\leq r\phi(x)$,
by definition of $\tilde{\mathcal{L}}$. Integrating both sides with
respect to $\mu_{i}$ gives
\[
\sum_{j=1}^{n} \int_{A_{j}} b(x)\mu_i(\mathrm{d}x)\int\phi(c)\mu_j(\mathrm{d}c)\leq
r\int\phi(x)\mu_i(\mathrm{d}x) .
\]
Therefore, the vector $p$ defined by $p_{i}:=\int\phi\mu_i$
is an $r$-sub-eigenvector for $Q$. Conversely, if $p$ is an
$r$-sub-eigenvector for $Q$ and if $\phi$ is as defined in
(\ref{eq:subeigen}), then
\[
\tilde{\mathcal{L}}\phi(x) = b(x)\sum_{i=1}^{n}1_{A_i}(x)
\sum_{j=1}^{n}p_{j}\int_{A_{j}}b(s)
\mu_i(\mathrm{d}s) \leq b(x)\sum_{i=1}^{n}1_{A_i}(x)rp_{i} = r\phi(x).
\]
Hence $\phi$ is an $r$-sub-eigenfunction and so is any
positive multiple of it.
\end{pf}

For the case $n=1$, Theorem \ref{theo:subeigen} implies the following.

\begin{corollary}
\label{cor:subeigen}
Assume that $b$ is a positive function and $\mu$ is a finite measure such
that $\mathcal{L}\phi(x) \leq b(x)\int_{\chi} \phi(s) \mu(\mathrm{d}s)$ for
every $x\in\chi$ and every positive $\phi$. Let $r=\int b(s)\mu(\mathrm{d}s)$.
Then $b$ is an $r$-sub-eigenfunction for $\mathcal{L}$.
\end{corollary}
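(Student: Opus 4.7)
The plan is to derive Corollary \ref{cor:subeigen} as the one-dimensional specialization of Theorem \ref{theo:subeigen}, though in fact a direct computation makes the whole thing transparent in a single line. I would first observe that the hypothesis is precisely that $\mathcal{L}$ is dominated (on positive functions) by the operator $\tilde{\mathcal{L}}\phi(x) = b(x)\int_\chi \phi(s)\mu(ds)$, which is of the form (\ref{eq:form}) with $n=1$ and $A_1=\chi$, since then $\sum_i 1_{A_i}(x) \equiv 1$.

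Next, I would instantiate Theorem \ref{theo:subeigen} in this $n=1$ setting. Here the matrix $Q$ reduces to the $1\times 1$ scalar
\[
Q = \int_{A_1} b(x)\mu(\mathrm{d}x) = \int_\chi b(x)\mu(\mathrm{d}x) = r,
\]
so any strictly positive scalar $p_1>0$ trivially satisfies $Qp_1 \leq rp_1$ (with equality). The formula (\ref{eq:subeigen}) then produces $\phi(x) = p_1\, b(x)$, which up to positive rescaling is just $b$ itself. The theorem concludes that $b$ is an $r$-sub-eigenfunction of $\tilde{\mathcal{L}}$, and since $\mathcal{L}\phi \leq \tilde{\mathcal{L}}\phi$ on positive $\phi$, it follows immediately that $b$ is an $r$-sub-eigenfunction of $\mathcal{L}$ as well.

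For completeness, I would probably just give the direct verification rather than invoke Theorem \ref{theo:subeigen}: apply the hypothesis with $\phi = b$ to obtain
\[
\mathcal{L}b(x) \;\leq\; b(x)\int_\chi b(s)\mu(\mathrm{d}s) \;=\; r\, b(x),
\]
which is exactly the statement that $b$ is an $r$-sub-eigenfunction of $\mathcal{L}$. There is no real obstacle here — the result is essentially a tautology once the definitions are unwound, and the only thing to check is that $b$ itself is admissible as a test function, which is granted by the hypothesis since $b>0$.
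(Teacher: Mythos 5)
Your proposal is correct and matches the paper's approach: the paper states the corollary as the $n=1$ specialization of Theorem~\ref{theo:subeigen} without further proof, and your instantiation (with $Q$ reducing to the scalar $r$ and $\phi=p_1 b$) is exactly that. The one-line direct verification $\mathcal{L}b(x)\leq b(x)\int b\,\mathrm{d}\mu = rb(x)$ you add is a clean, equivalent shortcut.
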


Note that for an $r$-sub-eigenfunction $\phi$ to be a drift
function, it must be greater than $1$. If $\phi$ is bounded
away from $0$, we can get a drift function simply by scaling $\phi$.
However, if $\phi$ is not bounded away from $0$, we first need to
truncate it, as in the following lemma.

\begin{lemma} \label{lem:trunc} Let $\phi$ be an
$r$-sub-eigenfunction for $\mathcal{L}$. Let $\epsilon>0$ and define
%
\begin{equation}
\phi_{\epsilon}(x)=\frac{1}{\epsilon}\max\{\phi(x),\epsilon\} .
\label{eq:trunc}
\end{equation}
Define $A_0:=\sup_{x}E [\frac{D_{x}f}{\phi(x)} ]$
and $r_\epsilon:=r+ \epsilon A_0$, and assume that $A_0<\infty$.
Then $\phi_{\epsilon}$ is an $r_\epsilon$-sub-eigenfunction for
$\mathcal{L}$.
\end{lemma}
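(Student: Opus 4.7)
The plan is to exploit two things: the positivity and linearity of the operator $\mathcal{L}g(x) = E[g(f(x))D_xf]$, and the elementary pointwise inequalities $\phi(y) \leq \max\{\phi(y),\epsilon\} \leq \phi(y) + \epsilon$. These let us bound $\phi_\epsilon$ from above by $\phi/\epsilon + 1$ and simultaneously bound $\phi$ from above by $\epsilon\phi_\epsilon$, which is exactly what is needed to compare $\mathcal{L}\phi_\epsilon$ to $\phi_\epsilon$.

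First, I would observe that $\phi_\epsilon(y) = \frac{1}{\epsilon}\max\{\phi(y),\epsilon\} \leq \frac{\phi(y)}{\epsilon} + 1$ for all $y$. Since $\mathcal{L}$ is a positive linear operator, this gives
\[
\mathcal{L}\phi_\epsilon(x) \;\leq\; \frac{1}{\epsilon}\mathcal{L}\phi(x) + \mathcal{L}\mathbf{1}(x) \;=\; \frac{1}{\epsilon}\mathcal{L}\phi(x) + E[D_xf].
\]
Next, I would apply the two standing hypotheses term by term: the sub-eigenfunction property gives $\mathcal{L}\phi(x) \leq r\phi(x)$, while the definition of $A_0$ gives $E[D_xf] \leq A_0 \phi(x)$.

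Finally, I would convert both $\phi(x)$ bounds into $\phi_\epsilon(x)$ bounds using $\phi(x) \leq \max\{\phi(x),\epsilon\} = \epsilon\phi_\epsilon(x)$. Thus $\frac{1}{\epsilon}\mathcal{L}\phi(x) \leq \frac{r}{\epsilon}\phi(x) \leq r\phi_\epsilon(x)$ and $E[D_xf] \leq A_0\epsilon\phi_\epsilon(x)$, so
\[
\mathcal{L}\phi_\epsilon(x) \;\leq\; r\phi_\epsilon(x) + A_0\epsilon\phi_\epsilon(x) \;=\; (r+A_0\epsilon)\phi_\epsilon(x) \;=\; r_\epsilon\phi_\epsilon(x),
\]
which is the claim. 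Positivity of $\phi_\epsilon$ (required in the definition of sub-eigenfunction) is automatic since $\phi_\epsilon \geq 1$.

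There is no serious obstacle; the argument is essentially a two-line sandwich estimate once one notices that the truncation $\max\{\phi,\epsilon\}$ is trapped between $\phi$ and $\phi+\epsilon$ up to the factor $1/\epsilon$. The only point deserving care is making sure that the upper bound $\phi \leq \epsilon\phi_\epsilon$ is applied \emph{after} using $\mathcal{L}\phi \leq r\phi$, rather than before, so that the contraction factor $r$ is preserved rather than inflated.
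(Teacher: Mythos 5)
Your proof is correct and follows essentially the same approach as the paper's: the paper establishes the single ratio inequality $\phi_{\epsilon}(f(x))/\phi_{\epsilon}(x)\leq(\phi(f(x))+\epsilon)/\phi(x)$, which is exactly your two-step sandwich $\max\{\phi(y),\epsilon\}\leq\phi(y)+\epsilon$ (numerator) and $\max\{\phi(x),\epsilon\}\geq\phi(x)$ (denominator) combined, and then takes expectations against $D_xf$ just as you do. The only difference is presentational: the paper works directly with the growth-condition ratio while you phrase it through the operator $\mathcal{L}$, but the underlying estimates are identical.
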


\begin{pf}
Since $\phi_{\epsilon}(x)\geq1$ for every $x$ and
$\frac{\phi_{\epsilon}(f(x))}{\phi_{\epsilon}(x)}\leq
\frac{\phi(f(x))+\epsilon}{\phi(x)}$, we have
%
\begin{equation}
E \biggl[\frac{\phi_{\epsilon}(f(x))}{\phi_{\epsilon}(x)}D_x
f \biggr] \leq
E \biggl[\frac{\phi(f(x))}{\phi(x)}D_{x}f \biggr]+\epsilon E \biggl[\frac{D_xf}{\phi(x)} \biggr]
\leq r+\epsilon A_0 .
\vspace*{6pt}
\end{equation}
\upqed
\end{pf}

The second strategy is to switch to an easier operator, analogously
to switching from one measure to another by the use of a
Radon--Nikodym derivative.

\begin{lemma}\label{lem:switch}
Assume that a
positive linear operator $\mathcal{L}_1$ has the integral
representation $\mathcal{L}_1(\phi)(x) = \int\phi(y)K(x,\mathrm{d}y)$
and let $\mathcal{L}_2(\phi)(x) = \frac{1}{h(x)}\int\phi(y)h(y)K(x,\mathrm{d}y)$,\vspace*{-2pt}
where $h$ is a strictly positive function. Then $\phi$ is an
$r$-sub-eigenfunction for
$\mathcal{L}_1$ if and only if $\frac{\phi}{h}$ is an
$r$-sub-eigenfunction for $\mathcal{L}_2$.
\end{lemma}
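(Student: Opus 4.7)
The plan is to reduce the lemma to a one-line algebraic identity linking the two operators, and then invoke the positivity of $h$ to transfer the sub-eigenfunction inequality.

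First, I would substitute $\phi/h$ into the defining formula for $\mathcal{L}_2$: since
\[
\mathcal{L}_2(\phi/h)(x) \;=\; \frac{1}{h(x)}\int \frac{\phi(y)}{h(y)}\,h(y)\,K(x,\mathrm{d}y) \;=\; \frac{1}{h(x)}\int \phi(y)\,K(x,\mathrm{d}y) \;=\; \frac{\mathcal{L}_1\phi(x)}{h(x)},
\]
the factor $h(y)$ in the integrand cancels cleanly against the $1/h(y)$ from the test function. This is the core identity that drives everything.

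Next, I would use this identity together with the hypothesis $h>0$ to transport the sub-eigenfunction inequality. If $\mathcal{L}_1\phi \le r\phi$ pointwise, then dividing through by the strictly positive function $h$ yields $\mathcal{L}_1\phi/h \le r\,\phi/h$, which by the identity above is exactly $\mathcal{L}_2(\phi/h) \le r(\phi/h)$. Conversely, if $\mathcal{L}_2(\phi/h) \le r(\phi/h)$, then multiplying both sides by $h(x)>0$ and using the identity recovers $\mathcal{L}_1\phi \le r\phi$. Positivity is also preserved in both directions: since $h$ is strictly positive, $\phi>0$ if and only if $\phi/h>0$, so the sign conditions in the definition of sub-eigenfunction are automatically equivalent.

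There is no real obstacle here; the lemma is essentially a change-of-variables bookkeeping statement, analogous to an $h$-transform in potential theory. The only thing worth flagging is that the argument implicitly requires $\mathcal{L}_1\phi$ and $\mathcal{L}_2(\phi/h)$ to be well defined (which is assumed by the integral representation and the strict positivity of $h$, so that no $0/0$ issues arise), and that the equivalence is genuinely pointwise in $x$, not merely almost everywhere. Thus the proof is just the calculation above followed by the trivial division/multiplication by $h$.
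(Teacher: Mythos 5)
Your proof is correct and takes essentially the same approach as the paper: both hinge on the cancellation identity $\mathcal{L}_2(\phi/h) = (\mathcal{L}_1\phi)/h$ and the strict positivity of $h$, with the only cosmetic difference being that the paper proves one direction and notes the other is symmetric, while you write out both explicitly.
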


\begin{pf}
It is enough to prove one direction only. Let $\phi$ be
an $r$-sub-eigenfunction for
$\mathcal{L}_1$. Then
\[
\mathcal{L}_2 \biggl(\frac{\phi}{h} \biggr)(x) = \frac{1}{h(x)}\int
\frac{\phi(y)}{h(y)}h(y)K(x,\mathrm{d}y) \leq r\frac{\phi(x)}{h(x)} .
\]
\upqed
\end{pf}

In particular, this lemma tells us that if
$r:=\sup_{x}K(x,\chi)<1$, then $1/h$ is an $r$-sub-eigenfunction
for $\mathcal{L}_2$.

\subsection{Example 1: Normal Gibbs sampler}\label{sec3.3}

We shall use the techniques of Section \ref{sec3.2} to find drift functions for
the Gibbs sampler example of Section \ref{sec1}. Recall that, without loss of
generality, we
assume that $K=0$ or $1$ and $\xi=0$. The following proposition gives
three different
drift functions that are valid under different conditions on the
parameters and
the data $Y$. It should be clear that other drift functions are
possible; also, the bounds $r_i$ can be tightened somewhat
at the cost of additional effort and/or more complicated expressions.
For numerical illustrations, see the remarks following the proof of
Proposition \ref{prop-K1bd}.

\begin{proposition} \label{theo:driftK1}
\textup{(i)} For given $K\geq0$, let
\[
A:=\frac{(\alpha+J/2)
(|\bar{Y}|\sqrt{K}+1)(
|\bar{Y}|\sqrt{K}+1/2)
}{\Sigma_{0}^{2}}
\quad\mbox{and}\quad
r_1:= \frac{(|\bar{Y}|\sqrt{K}+1)(|\bar{Y}|\sqrt{K}+{1/2})}{
\alpha+{J/2}-1}.
\]
If $r_1<1$, then for any $\epsilon$ such that
$r_{1,\epsilon}:= r_{1}+\epsilon A<1$, $\phi_{1,\epsilon}(x):=\frac
{1}{\epsilon}\max(\epsilon,\frac{1}{x^2})$ is a drift function with
rate $r_{1,\epsilon}$.
{\smallskipamount=0pt
\begin{longlist}[(iii)]
\item[(ii)] Assume $K=1$. Let $r_2:=(\alpha+\frac{J}{2})\frac
{J^2}{\Sigma_{0}^{2}}(|\bar{Y}|+1)(|\bar{Y}|+\frac{1}{2})$. If $r_2<1$,
then $\phi_2(x)=1$ is a drift function with rate $r_2$.\\
\item[(iii)] Assume $K=1$. Define
%
\begin{equation}\label{eq-defb}
\hat{A} := \frac{(|\bar{Y}|+1)(\alpha+{J/2})J\sqrt{2\uppi}}{2
\Sigma_0^{2}},\qquad
b(x):= \frac{J}{\sqrt{2\uppi}}
\biggl(\frac{2|\bar{Y}|}{(xJ+1)^{3/2}}+\frac{1}{xJ+1} \biggr)
\end{equation}
and
\[
r_3 := \frac{1}{\sqrt{2\uppi}} \biggl(4|\bar{Y}|
\biggl(1-\frac{1}{\sqrt{J(\alpha+J/2)/{\Sigma_0}+1}} \biggr)
+
\log{ \biggl( \frac{ J(\alpha+J/2)}{\Sigma_0}+1 \biggr)}
\biggr).
\]
If $r_3<1$, then for any $\epsilon$ such that
$r_{3,\epsilon}:= r_{3}+\epsilon\hat{A}<1$, the function
$\phi_{3,\epsilon}(x)=\frac{1}{\epsilon}\max(\epsilon,b(x))$ is a drift
function with rate $r_{3,\epsilon}$.
\end{longlist}}
\end{proposition}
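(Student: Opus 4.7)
The common framework is to exhibit a positive $r$-sub-eigenfunction $\phi$ of the operator $\mathcal{L}\phi(x):=E[\phi(f(x))D_x f]$ (so that $\mathcal{L}\phi\leq r\phi$), and, if $\phi$ is not already bounded below by $1$, to invoke the truncation Lemma~\ref{lem:trunc} to turn it into a drift function $\phi_\epsilon$ at the inflated rate $r+\epsilon A$, where $A$ dominates $\sup_x E[D_x f/\phi(x)]$. The preparatory step is to differentiate (\ref{GS.Afdef}), obtaining $D_x f=|f'(s)|=GJ|g(s,Z)g'(s,Z)|/c(s,Z)^2$ with $c(s,Z):=\Sigma_0+(J/2)g(s,Z)^2$, together with the clean factorisation
\[
 g(s,Z)\,g'(s,Z)\;=\;-\frac{J}{2u^3}(W-\bar YK)(W-2\bar YK),\qquad u:=sJ+K,\ W:=Z\sqrt{u},
\]
and the identity $f(s)\,c(s,Z)=G$ (which yields the alternative form $|f'(s)|=f(s)J|gg'|/c(s,Z)$).

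For part (i), the identity above gives $\phi_1(f)|f'|/\phi_1(s)=s^2 J|gg'|/G$, so the independence of $Z$ and $G$ yields $\mathcal{L}\phi_1(s)/\phi_1(s)=s^2 J\,E[1/G]\,E_Z[|gg'|]$ with $E[1/G]=1/(\alpha+J/2-1)$. The bound $|(W-\bar YK)(W-2\bar YK)|\leq(|W|+|\bar Y|K)(|W|+2|\bar Y|K)$, combined with $E[W^2]=u$ and $E[|W|]\leq\sqrt{u}$ (Cauchy--Schwarz), $s^2\leq u^2/J^2$, and the simplification $K/\sqrt{u}\leq\sqrt{K}$ (valid since $u\geq K$), then produces exactly the stated $r_1$. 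The constant $A$ is obtained by the same chain of estimates but with $E[1/G]$ replaced by $E[G]=\alpha+J/2$ and $1/c^2$ replaced by $1/\Sigma_0^2$; Lemma~\ref{lem:trunc} completes the passage to $\phi_{1,\epsilon}$. Part (ii) is essentially the same computation with $\phi_2\equiv 1$: $E[D_x f]\leq(E[G]J/\Sigma_0^2)E_Z[|gg'|]$, and since $(u+3\sqrt{u}|\bar Y|+2\bar Y^2)/u^3$ is monotonically decreasing in $u\geq 1$, its maximum at $u=1$ delivers $r_2$ and no truncation is required.

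Part (iii) is the main obstacle. Since the natural kernel of $\mathcal{L}$ has full support in $(0,\infty)$, Corollary~\ref{cor:subeigen} cannot be applied directly with a compactly supported measure; instead I would verify $\mathcal{L}b\leq r_3 b$ by direct computation. Using $fc=G$ to write $1/(fJ+1)=c/(GJ+c)$ and $(fJ+1)^{-3/2}=c^{3/2}/(GJ+c)^{3/2}$, the integrand $b(f)|f'|$ becomes rational in $G$; Jensen's inequality applied to the concave map $G\mapsto G/(GJ+c)$ gives $E_G[G/(GJ+c)]\leq(\alpha+J/2)/(J(\alpha+J/2)+c)$, reducing the $1/(fJ+1)$-piece of $\mathcal{L}b(x)/b(x)$ to an expression of the form $F(a)-F(b)$, where $F(\eta):=E[Z^2/(\eta+Z^2)]$ and $a,b$ are proportional to $u\Sigma_0$ and $u(J(\alpha+J/2)+\Sigma_0)$ respectively. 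The key estimate is the AM--GM bound $(\eta+Z^2)^2\geq 4\eta Z^2$, which yields $-F'(\eta)=E[Z^2/(\eta+Z^2)^2]\leq 1/(4\eta)\leq 1/(\sqrt{2\pi}\,\eta)$, so integration from $a$ to $b$ gives
\[
 F(a)-F(b)\;\leq\;\frac{1}{\sqrt{2\pi}}\log\frac{b}{a}\;=\;\frac{1}{\sqrt{2\pi}}\log\frac{J(\alpha+J/2)+\Sigma_0}{\Sigma_0},
\]
matching the logarithmic term of $r_3$. The $2|\bar Y|/(fJ+1)^{3/2}$ piece is treated analogously, using the companion AM--GM inequality $|Z|/(\eta+Z^2)\leq 1/(2\sqrt{\eta})$ and the antiderivative $\int d\eta/\eta^{3/2}=-2/\sqrt{\eta}$ to produce the $4|\bar Y|(1-1/\sqrt{v^*})$ contribution. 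Lemma~\ref{lem:trunc} then converts $b$ into the drift function $\phi_{3,\epsilon}$; the bound $\hat A\geq A_0$ follows by lower-bounding $b(x)\geq(J/\sqrt{2\pi})/u$ and upper-bounding $E[D_x f]$ via the part (ii) estimate.
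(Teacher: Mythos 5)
Parts (i) and (ii) of your proposal reproduce the paper's argument in essence: you switch to the $\mathcal L_1$ picture via the Radon--Nikodym Lemma~\ref{lem:switch} with $h(c)=c^2$ for (i), and for (ii) you bound $E[D_xf]$ directly; your book-keeping through $W=Z\sqrt{u}$ is equivalent to the paper's use of the Schwarz bound (\ref{eq-Zcalc}). These two parts are fine.

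Part (iii), however, has genuine gaps. First, your reason for abandoning Corollary~\ref{cor:subeigen} -- that the measure there must be compactly supported -- is a misreading: the corollary only requires a \emph{finite} measure, and the paper applies it with $\mu(\mathrm{d}c)=\bar H(c\Sigma_0)\,\mathrm{d}c$, which has full support on $(0,\infty)$ but is finite since $\bar H$ is integrable. The change of variables $u=cu_x(z)$ is what decouples the $c$- and $z$-integrals and makes $b(x)$ emerge as a clean multiplicative prefactor; this is the step your route omits, and it is exactly what neutralizes the $\bar Y$-dependence inside the integral. Second, because of that omission, your proposed reduction to $F(a)-F(b)$ with $F(\eta)=E[Z^2/(\eta+Z^2)]$ only holds when $\bar Y=0$: with $\bar Y\neq 0$ one has $c=\Sigma_0+\frac{J}{2u}(\bar Y/\sqrt u - Z)^2$ and $|gg'|\propto|W-\bar Y||W-2\bar Y|$, so the $Z$-integral involves a noncentred quadratic and does not collapse to your $F$. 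Third, your derivation of $\hat A$ is incorrect: lower-bounding $b(x)\geq(J/\sqrt{2\pi})/(xJ+1)$ and applying the pointwise version of the part (ii) estimate gives $\sup_x E[D_xf]/b(x)\leq\frac{J(\alpha+J/2)\sqrt{2\pi}}{\Sigma_0^2}(|\bar Y|+1)(|\bar Y|+1/2)$, which exceeds $\hat A=\frac{(|\bar Y|+1)(\alpha+J/2)J\sqrt{2\pi}}{2\Sigma_0^2}$ by a factor of $2|\bar Y|+1\geq 1$. The paper's sharper route observes that the second factor of the Schwarz bound $(|\bar Y|/(xJ+1)^{3/2}+1/(2(xJ+1)))$ is identically $\frac{\sqrt{2\pi}}{2J}b(x)$, so $b(x)$ appears exactly and the ratio $E[D_xf]/b(x)$ is bounded by $\hat A$ with no slack. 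You would need to recover this factorisation rather than lower-bound $b$.
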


\begin{pf}
The idea of the proof is that for each case, we find a
sub-eigenfunction $\phi$ for the operator~$\mathcal{L}$ and, if
necessary, we truncate $\phi$, as in Lemma \ref{lem:trunc}, to obtain a
drift function.

Recall $\mathcal{L}(\phi)(x)=E[\phi(f(x))D_x f]$, where
\[
f(x)=\frac{G}{\Sigma_0+({J}/{2})
(\bar{Y}K/(xJ+K)-Z/{\sqrt{xJ+K}} )^{2}} ,
\]
and $G$ and $Z$ are two independent random variables with
$\Gamma(\alpha+J/2,1)$ and $N(0,1)$ distributions, respectively. We shall
frequently use (without reference) the following two easy calculations
for $G$ and $Z$. First, the definition of the Gamma
distribution implies
that
%
\begin{equation}\label{eq-Gcalc}
E(G^p) = \frac{\Gamma(\alpha+{J}/{2}+p )}{
\Gamma(\alpha+{J}/{2} )}\qquad
\mbox{for }p> - \biggl(\alpha+\frac{J}{2} \biggr).
\end{equation}
Second, for all constants $a,b,c,d$,
the Schwarz inequality and $E(Z^2)=1$ imply
%
\begin{equation}
\label{eq-Zcalc}
E(|a+bZ||c+dZ|) \leq \sqrt{a^2+b^2}\sqrt{c^2+d^2} \leq
(|a|+|b|)(|c|+|d|) .
\end{equation}

(i) The local Lipschitz constant $D_x f$ is equal to the absolute
value of
the derivative $f$ at $x$, so, by direct computation,
%
\begin{equation}
D_{x}f=\frac{GJ^{2} |\bar{Y}K/(xJ+K)-{Z}/{\sqrt{xJ+K}}
| |\bar{Y}K/{(xJ+K)^2}-{Z}/(2(xJ+K)^{{3}/{2}}) |}
{ (\Sigma_0+({J}/{2}) ({\bar{Y}K}/({xJ+K})-{Z}/{\sqrt{xJ+K}}
)^{2} )^{2}} . \label{eq:Lip}%
\end{equation}
Let $k_{x}$ be the joint distribution
of $f(x)$ and $\tilde{D}_x$, where
\[
\tilde{D}_x := \frac{J^{2} |{\bar{Y}K}/{(xJ+K)}-{Z}/{\sqrt{xJ+K}}
| |{\bar{Y}K}/{(xJ+K)^2}-{Z}/({2(xJ+K)^{{3}/{2}}}) |}
{G}
\]
and let $K_{x}(\mathrm{d}c)=x^2 (\int_{0<y<\infty} y k_{x}(\mathrm{d}c,\mathrm{d}y) )$.
Note that $f(x)^2\tilde{D}_x=D_xf$. Therefore,
\[
\mathcal{L}(\phi)(x) = E [\phi(f(x) )D_xf ] =
E [\phi(f(x) )f(x)^2\tilde{D}_x ] =
\frac{1}{h(x)}\int\phi(c)h(c)K_{x}(\mathrm{d}c),
\]
where $h(c)=c^2$. Let $\mathcal{L}_1$ be the operator defined by
$\mathcal{L}_1\phi(x):=\int\phi(c)K_{x}(\mathrm{d}c)$
and let $\mathcal{L}_2=\mathcal{L}$. By Lemma~\ref{lem:switch}, we see
that if $\phi$ is an $r$-sub-eigenfunction for $\mathcal{L}_1$ then
$\frac{\phi}{h}$ is an $r$-sub-eigenfunction for
$\mathcal{L}_2=\mathcal{L}$. We find that
\begin{eqnarray*}
\sup_{x}\int_{0}^{\infty} K_{x}(\mathrm{d}c)&=& \sup_x x^2 E[\tilde{D}_x] \\
&\leq&\sup_{x}\frac{x^2J^2}{(xJ+K)^{2}}\frac{(|\bar{Y}|\sqrt{K}+1)(
|\bar{Y}|\sqrt{K}+{1}/{2})}{\alpha+{J}/{2}-1}\\
&=& r_1 .
\end{eqnarray*}
If $r_1<1$, then $\phi(x)=1$ is an $r_1$-sub-eigenfunction
for $\mathcal{L}_1$ and hence
$\phi_1(x)={x^{-2}}$ is an $r_1$-sub-eigenfunction for
$\mathcal{L}$. Finally, note that for every $x>0$,
\[
E \biggl[\frac{D_xf}{\phi_2(x)} \biggr] =
E \biggl[\frac{(xJ)^{2}}{(xJ+K)^2}
\frac{G |{\bar{Y}K}/{\sqrt{xJ+K}}-Z
| |{\bar{Y}K}/{\sqrt{xJ+K}}-{Z}/{2} |}
{ (\Sigma_0+({J}/{2}) ({\bar{Y}K}/({xJ+K})-{Z}/{\sqrt{xJ+K}}
)^{2} )^{2}} \biggr]
\leq A .
\]
Hence, by Lemma \ref{lem:trunc}, $\phi_{1,\epsilon}$ is a drift function
with growth rate less than $r_1+\epsilon A$.

(ii) When $K=1$, $\sup_x E(D_xf)\leq r_2$. If $r_2<1$ and we let
$\phi_2(x)=1$ $\forall x$, then $\mathcal{L}\phi_2(x)=E(D_xf)\leq r_2\phi
_2(x)$ and
thus $\phi_2(x)$ is a drift function with rate $r_2$.

(iii)
We first derive a more explicit formula for $\mathcal{L}$ and then look for
an operator $\tilde{\mathcal{L}}$ of the form~(\ref{eq:form}) with $n=1$
that dominates $\mathcal{L}$ (as in Corollary \ref{cor:subeigen}).
Note that we can write
\[
\mathcal{L}(\phi)(x)=\int_{0}^{\infty}\phi(c) \biggl(\int_{-\infty}^{\infty}%
\Delta_{x}(z,c)h_{Z,f(x)}(z,c)\,\mathrm{d}z \biggr) \,\mathrm{d}c ,
\]
where $h_{Z,f(x)}$ is
the joint density of $(Z,f(x))$ and
\[
\Delta_{x}(z,c)=
\frac{cJ^2 |{\bar{Y}}/{(xJ+1)}-{z}/{\sqrt{xJ+1}}
| |{\bar{Y}}/{(xJ+1)^2}-{z}/(2(xJ+1)^{3/2}) |}
{\Sigma_0+({J}/{2}) ({\bar{Y}}/({xJ+1})-{z}/{\sqrt{xJ+1}})^{2}}
\]
(observe that $\Delta_{x}(Z,f(x))=D_x f$, by (\ref{eq:Lip})).
To simplify the formulae, let us put
\[
A_x(z)=\frac{\bar{Y}}{(xJ+1)}-\frac{z}{\sqrt{xJ+1}},\qquad
B_x(z)= \biggl|\frac{\bar{Y}}{(xJ+1)^2}-\frac{z}{2(xJ+1)^{{3}/{2}}} \biggr|
\]
and $u_{x}(z) = \Sigma_0+\frac{J}{2}A_x(z)^{2}$.

To find $h_{Z,f(x)}$, we consider the mapping
$T_{x}(z,g)= (z,{g}/{u_{x}(z)} )$.
Note that $T_{x}(Z,G)=(Z,f(x))$. $T_{x}(z,c)$ is one-to-one and
$T_{x}^{-1}(z,c)=(z,
c(u_{x}(z)))$. Let $D$ be the Jacobian of $T^{-1}$. We have
$h_{Z,f(x)}(z,c)=h_{Z,G}(T_x^{-1}(z,c))|\det D|$ and
$|\det D| = u_{x}(z)$; therefore,
\[
h_{Z,f(x)}(z,c) = \frac{1}{\Gamma(\alpha+{J}/{2})\sqrt{2\uppi}}u_{x}(z)%
\mathrm{e}^{-z^{2}/2}(cu_{x}(z))^{\alpha+{J}/{2}-1}%
\mathrm{e}^{-cu_{x}(z)}.
\]
Now,
\begin{eqnarray*}
&&\int_{-\infty}^{\infty}\Delta_{x}(z,c)h_{Z,f(x)}(z,c)\,\mathrm{d}z\\
&&\quad =
\frac{cJ^{2}}{\Gamma(\alpha+{J}/{2})\sqrt{2\uppi}}
\biggl( \int_{z\leq\bar{Y}/{\sqrt{xJ+1}}}\mathrm{e}^{-z^{2}/2}
A_{x}(z)B_{x}(z) (cu_{x}(z) )^{\alpha+{J}/{2}-1}%
\mathrm{e}^{-cu_{x}(z)}\,\mathrm{d}z\\
&&\qquad\hphantom{\frac{cJ^{2}}{\Gamma(\alpha+{J}/{2})\sqrt{2\uppi}}\biggl(}
{} -\int_{z>\bar{Y}/{\sqrt{xJ+1}}}\mathrm{e}^{-z^{2}/2}
A_{x}(z)B_{x}(z)(cu_{x}(z))^{\alpha+{J}/{2}-1}%
\mathrm{e}^{-cu_{x}(z)}\,\mathrm{d}z \biggr).
\end{eqnarray*}
Substituting $u=cu_{x}(z)$ and noting that $\mathrm{d}u=-cJ\frac{1}{\sqrt
{xJ+1}}A_{x}(z)\,\mathrm{d}z$,
we get
\begin{eqnarray*}
&&\int_{-\infty}^{\infty}\Delta_{x}(z,c)h_{Z,f(x)}(z,c)\,\mathrm{d}z\\
&&\quad=\int_{u\geq
c\Sigma_0}\frac{J}{\Gamma(\alpha+{J}/{2})2\sqrt{2\uppi}\sqrt{xJ+1}}
u^{\alpha+{J}/{2}-1}\mathrm{e}^{-u} \\
&&\qquad\hphantom{\int_{u\geq c\Sigma_0}}
{}\times \Biggl[\mathrm{e}^{-({1}/{2})(xJ+1) ((\bar{Y}/(xJ+1))+\sqrt{
({2}/{J})({u}/{c}-\Sigma_0)} )^{2}}%
\Biggl|\frac{\bar{Y}}{xJ+1}-
\sqrt{\frac{2}{J} \biggl(\frac{u}{c}-\Sigma_0 \biggr)} \Biggr|\\
&&\qquad\hphantom{\int_{u\geq c\Sigma_0}\times \Biggl[}
{}+\mathrm{e}^{-({1}/{2})(xJ+1) (({\bar{Y}}/{(xJ+1)})-\sqrt{
({2}/{J})({u}/{c}-\Sigma_0)} )^{2}}%
\Biggl|\frac{\bar{Y}}{xJ+1}
+\sqrt{\frac{2}{J} \biggl(\frac{u}{c}-\Sigma_0 \biggr)} \Biggr| \Biggr]\,\mathrm{d}u.
\end{eqnarray*}
Using the inequality $ |t \mathrm{e}^{-C(A+t)^{2}} |\leq
|A|+\frac{1}{\sqrt{2C}}$\vspace*{-2pt}
(where $A$ and $t$ are real and $C>0$), we bound the term inside the
brackets by $2 (\frac{2|\bar{Y}|}{xJ+1}+\frac{1}{\sqrt{xJ+1}} )$.
Hence, $\mathcal{L}(\phi)(x)\leq b(x)\int_{0}^{\infty}\phi(c)\bar
{H}(c\Sigma_0)\,\mathrm{d}c$,
where $b(x)$ is defined in (\ref{eq-defb})
and $\bar{H}$ is one minus the c.d.f.  of our gamma variable $G$, that
is,
$\bar{H}(x) = \Pr\{G>x\}$.\vadjust{\goodbreak}

Next, we compute $r = \int_{0}^{\infty}b(c)\bar{H}(c\Sigma_0)\,\mathrm{d}c$.
Let $g$ be the density of $G$. Note
%
\begin{eqnarray}
\int_{0}^{\infty}\frac{1}{(cJ+1)^{{3}/{2}}}
\bar{H}(c\Sigma_0)\,\mathrm{d}c&=&\frac{2}{J}\int_{0}^{\infty}
\biggl(1-\frac{1}{\sqrt{xJ/\Sigma_0+1}} \biggr)g(x) \,\mathrm{d}x \nonumber\\
&\leq& \frac{2}{J} \biggl(1-\frac{1}{\sqrt{\int_{0}^{\infty}
({xJ}/{\Sigma_0}+1 )g(x)\,\mathrm{d}x}} \biggr)
\label{eq:Jensen2}\\
&=&\frac{2}{J} \biggl(1-\frac{1}{\sqrt{J(\alpha+{J}/{2})/{\Sigma
_0}+1}} \biggr)
\end{eqnarray}
and
%
\begin{eqnarray}
\int_{0}^{\infty}\frac{1}{cJ+1} \bar{H}(c\Sigma_0) \,\mathrm{d}c&=&
\frac{1}{J}\int_{0}^{\infty}
\log\biggl(\frac{xJ}{\Sigma_0}+1 \biggr)g(x)\,\mathrm{d}x \nonumber\\
&\leq& \frac{1}{J}\log \biggl(\int_{0}^{\infty}
\biggl(\frac{xJ}{\Sigma_0}+1 \biggr)g(x) \,\mathrm{d}x \biggr)
\label{eq:Jensen1}\\
&=&\frac{1}{J}\log{ \biggl(\frac{J(\alpha+{J}/{2})}{\Sigma_0}+1 \biggr)},
\end{eqnarray}
where (\ref{eq:Jensen2}) and (\ref{eq:Jensen1}) follow from Jensen's
inequality. Therefore, $r\leq r_3$. We conclude that $\phi_3$ is an
$r_3$-sub-eigenfunction.

Using (\ref{eq-Zcalc}), we have
\begin{eqnarray*}
E(D_xf) & \leq& \frac{(\alpha+{J}/{2})J^2}{\Sigma_0^2}
E \biggl( \biggl|\frac{\bar{Y}}{xJ+1}-\frac{Z}{\sqrt{xJ+1}} \biggr|
\biggl|\frac{\bar{Y}}{(xJ+1)^2}-\frac{Z}{2(xJ+1)^{3/2}} \biggr| \biggr)
\\
& \leq& \frac{(\alpha+{J}/{2})J^2}{\Sigma_0^2 (xJ+1)}
\biggl( \frac{|\bar{Y}|}{\sqrt{xJ+1}}+1 \biggr)
\biggl(\frac{|\bar{Y}|}{(xJ+1)^{3/2}} +\frac{1}{2(xJ+1)} \biggr) \\
& = & \frac{(\alpha+{J}/{2})J\sqrt{2\uppi}}{2\Sigma_0^2 (xJ+1)}
\biggl( \frac{|\bar{Y}|}{\sqrt{xJ+1}}+1 \biggr) b(x) ,
\end{eqnarray*}
where $b$ is defined in equation (\ref{eq-defb}). Hence, $
\sup_{x}E [{D_{x}f}/{b(x)} ] \leq \hat{A}$.
By Corollary \ref{cor:subeigen} and Lemma~\ref{lem:trunc}, the
function $\phi_{3,\epsilon}$
\cut{$\= \frac{1}{\epsilon}\max\{ \epsilon,
\frac{J}{\sqrt{2\pi}} (\frac{2|\bar{Y}|}{(xJ+1)^{3/2}}
+\frac{1}{xJ+1} ) \}$}
is a drift function with growth rate less than $r_{3,\epsilon}$.
\end{pf}

\begin{proposition}\label{prop-K1bd}
Define $r_i$ and $r_{i,\epsilon}$ as in
Proposition \ref{theo:driftK1}:
\begin{longlist}[(iii)]
\item[(i)] Let $K\geq0$ and assume that $\alpha+J/2>2$. If $r_{1,\epsilon
}<1$, then
for all $x>0$ and all $n\geq1$,
\[
d_{\mathrm{W}}(P_K^{n}(x,\cdot),\pi_K) \leq
\frac{\hat{C}_{1,\epsilon,x}}{1-r_{1,\epsilon}} r_{1,\epsilon}^{n}
,\vadjust{\goodbreak}
\]
where
\begin{eqnarray*}
\hat{C}_{1,\epsilon,x} &=&
\biggl(x+\frac{\alpha+J/2}{\Sigma_0} \biggr) \biggl(\max\biggl\{\frac{1}{\epsilon x^2},1 \biggr\}\\
&&\hphantom{\biggl(x+\frac{\alpha+J/2}{\Sigma_0} \biggr)\biggl(}
{}+\biggl(\Sigma_0^2+\frac{J\Sigma_0}{xJ+K}
\biggl[\frac{(\bar{Y}K)^2}{xJ+K}+1 \biggr] \\
&&\hphantom{\biggl(x+\frac{\alpha+J/2}{\Sigma_0} \biggr) \biggl({}+\biggl(}
{}+\frac{J^2}{4(xJ+K)^2}
\biggl[\frac{(\bar{Y}K)^4}{(xJ+K)^2}+\frac{6(\bar{Y}K)^2}{xJ+K}+3 \biggr]\biggr)
\\
&&\hphantom{\biggl(x+\frac{\alpha+J/2}{\Sigma_0} \biggr) \biggl({}+\ }
{}\times\bigl(\epsilon(\alpha+J/2-1) (\alpha+J/2-2)\bigr)^{-1} \biggr).
\end{eqnarray*}
\item[(ii)] Assume $K=1$. If $r_2<1$, then
for all $x>0$ and all $n\geq1$,
\[
d_{\mathrm{W}}(P_1^{n}(x,\cdot),\pi_1) \leq
\frac{x+({\alpha+{J}/{2}})/{\Sigma_0}}{1-r_2} r_{_2}^{n}.
\]
\item[(iii)] Assume $K=1$. If $r_{3,\epsilon}<1$, then
for all $x>0$ and all $n\geq1$,
\[
d_{\mathrm{W}}(P_1^{n}(x,\cdot),\pi_1) \leq
\frac{\hat{C}_{3,\epsilon,x}}{1-r_{3,\epsilon}} r_{3,\epsilon}^{n} ,
\]
where
\[
\hat{C}_{3,\epsilon,x} :=
\max\biggl\{1, \frac{J(2|\bar{Y}|+1)}{\epsilon \sqrt{2\uppi}} \biggr\}
\biggl(x+\frac{\alpha+J/2}{\Sigma_0} \biggr).
\]
\end{longlist}
\end{proposition}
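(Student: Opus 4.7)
The plan is to apply Steinsaltz's convergence theorem (Theorem \ref{theo:steinsaltz}) with each of the three drift functions established in Proposition \ref{theo:driftK1}. Since local contractivity with rate $r_{i,\epsilon}$ (or $r_2$) has already been verified there, the remaining work is to find, for each case, an explicit upper bound on
\[
C_{x} = E\Bigl[\rho\bigl(f(x),x\bigr)\sup_{0\leq t\leq 1}\phi\bigl(x+t\bigl(f(x)-x\bigr)\bigr)\Bigr].
\]
A common ingredient is the moment bound $E[f(x)]\leq (\alpha+J/2)/\Sigma_0$. This follows because the denominator of $f$ in (\ref{GS.Afdef}) is at least $\Sigma_0$, so $f(x)\leq G/\Sigma_0$, and then (\ref{eq-Gcalc}) with $p=1$ gives $E[G]=\alpha+J/2$. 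Combined with $|f(x)-x|\leq x+f(x)$, this produces the universal factor $x+(\alpha+J/2)/\Sigma_0$ that appears in every $\hat{C}_{i,\epsilon,x}$.

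Case (ii) is then immediate: with $\phi_2\equiv 1$, the sup in the integrand equals $1$, so $C_{x}\leq x+(\alpha+J/2)/\Sigma_0$, and plugging into Theorem~\ref{theo:steinsaltz} yields the claim. Case (iii) is essentially as direct: the function $b(x)$ in (\ref{eq-defb}) is decreasing, so $b(x)\leq b(0)=J(2|\bar{Y}|+1)/\sqrt{2\uppi}$, hence $\phi_{3,\epsilon}(y)\leq\max\{1,J(2|\bar{Y}|+1)/(\epsilon\sqrt{2\uppi})\}$ uniformly in $y$. Pulling this constant out of the expectation and combining with the bound on $E[|f(x)-x|]$ gives exactly $\hat{C}_{3,\epsilon,x}$, and Theorem \ref{theo:steinsaltz} finishes (iii).

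Case (i) is the main obstacle and requires more care, since $\phi_{1,\epsilon}(y)=\max\{1,1/(\epsilon y^2)\}$ blows up near $0$ and so has no useful global upper bound. The key observation is that $\phi_{1,\epsilon}$ is nonincreasing, so
\[
\sup_{0\leq t\leq 1}\phi_{1,\epsilon}\bigl(x+t(f(x)-x)\bigr)=\phi_{1,\epsilon}\bigl(\min(x,f(x))\bigr)\leq \phi_{1,\epsilon}(x)+\tfrac{1}{\epsilon}\,\tfrac{1}{f(x)^2},
\]
the latter inequality following by splitting on whether $f(x)\geq x$ or $f(x)<x$ and using $\phi_{1,\epsilon}\geq 1$. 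Substituting into $C_{x}$ and bounding $|f(x)-x|\leq x+f(x)$ leaves us needing to control $E[1/f(x)^2]$. Because $G_t$ and $Z_t$ are independent, this factors as $E\bigl[(\Sigma_0+(J/2)A_x(Z)^2)^2\bigr]\cdot E[1/G^2]$, where $A_x(z)=\bar{Y}K/(xJ+K)-z/\sqrt{xJ+K}$. The Gamma moment $E[1/G^2]=1/[(\alpha+J/2-1)(\alpha+J/2-2)]$ (requiring $\alpha+J/2>2$) comes from (\ref{eq-Gcalc}) with $p=-2$, and the polynomial $E[(\Sigma_0+(J/2)A_x(Z)^2)^2]$ is expanded using $E[Z^{2k-1}]=0$, $E[Z^2]=1$, $E[Z^4]=3$. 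This yields the bracketed expression
\[
\Sigma_0^2+\tfrac{J\Sigma_0}{xJ+K}\Bigl[\tfrac{(\bar{Y}K)^2}{xJ+K}+1\Bigr]+\tfrac{J^2}{4(xJ+K)^2}\Bigl[\tfrac{(\bar{Y}K)^4}{(xJ+K)^2}+\tfrac{6(\bar{Y}K)^2}{xJ+K}+3\Bigr]
\]
occurring in $\hat{C}_{1,\epsilon,x}$. Collecting the $|f(x)-x|$-factor estimate with these moment computations gives the stated $\hat{C}_{1,\epsilon,x}$, and Theorem~\ref{theo:steinsaltz} concludes (i). The delicate point to handle carefully in this last computation is the dependence between $|f(x)-x|$ and $\phi_{1,\epsilon}(f(x))$ (both are functions of the same $G_t,Z_t$); rather than trying to decorrelate them, the cleanest route is to bound $|f(x)-x|\leq x+f(x)$, expand, and then for the cross term $E[f(x)/f(x)^2]=E[1/f(x)]$ simply use the cruder bound obtained from the $\phi_{1,\epsilon}(x)$-summand to keep the final constants in the clean factored form.
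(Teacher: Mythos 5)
Your overall strategy (Steinsaltz's Theorem~\ref{theo:steinsaltz} applied with each drift function from Proposition~\ref{theo:driftK1}, plus $E[f(x)]\leq(\alpha+J/2)/\Sigma_0$ from bounding the denominator by $\Sigma_0$) is exactly the paper's, and your treatments of cases (ii) and (iii) are correct and essentially identical to the paper's, including the observation that $b$ is decreasing so $\sup_y\phi_{3,\epsilon}(y)=\max\{1,J(2|\bar Y|+1)/(\epsilon\sqrt{2\uppi})\}$.

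Case (i), however, has a genuine gap. Your bound on the supremum,
\[
\sup_{0\leq t\leq 1}\phi_{1,\epsilon}\bigl(x+t(f(x)-x)\bigr)\leq \phi_{1,\epsilon}(x)+\frac{1}{\epsilon f(x)^2},
\]
is the same one the paper uses. But the target constant $\hat C_{1,\epsilon,x}$ has the \emph{factored} form
\[
\Bigl(x+E[f(x)]\Bigr)\Bigl(\phi_{1,\epsilon}(x)+\tfrac{1}{\epsilon}E[1/f(x)^2]\Bigr),
\]
and to reach it from $E\bigl[(x+f(x))(\phi_{1,\epsilon}(x)+\tfrac{1}{\epsilon f(x)^2})\bigr]$ you must justify
\[
E\Bigl[\bigl(x+f(x)\bigr)\cdot\tfrac{1}{f(x)^2}\Bigr]\;\leq\;E[x+f(x)]\cdot E[1/f(x)^2].
\]
This is precisely the covariance (FKG/Chebyshev) inequality for an increasing and a decreasing function of the single random variable $f(x)$, which is what the paper invokes. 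You explicitly say you will avoid "decorrelating," but if you instead expand and isolate the cross term $E[1/f(x)]$, you then need $E[1/f(x)]\leq E[f(x)]\,E[1/f(x)^2]$ — which is again the same FKG-type inequality (it also follows from Cauchy--Schwarz plus Jensen, but it is not free). Your remark that one can "use the cruder bound obtained from the $\phi_{1,\epsilon}(x)$-summand" for this cross term does not produce a valid inequality: for $x$ large and $\epsilon$ small, $\epsilon\phi_{1,\epsilon}(x)E[f(x)]$ can be much smaller than $E[1/f(x)]$, so no such absorption works. To complete case (i) as stated you must invoke the FKG inequality (or an equivalent monotone-antimonotone covariance bound), exactly as the paper does.
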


\begin{pf}
(i) If $r_{1,\epsilon}<1$, then $
d_{\mathrm{W}}(P_K^{n}(x,\cdot),\pi_1) \leq
\frac{C_{1,\epsilon,x}}{1-r_{1,\epsilon}} r_{1,\epsilon}^{n}$,
where
\begin{eqnarray*}
C_{1,\epsilon,x}&=& E \Bigl[|f(x)-x|\sup_{t\in[0,1]}\bigl\{\phi_{1,\epsilon
}\bigl(x+t\bigl(f(x)-x\bigr)\bigr)\bigr\} \Bigr]\\
&\leq&E \biggl[\bigl(f(x)+x\bigr)\max\biggl\{\frac{1}{\epsilon
x^2},\frac{1}{\epsilon f(x)^2},1 \biggr\} \biggr]\\
&\leq&E \biggl[\bigl(f(x)+x\bigr) \biggl(\max\biggl\{\frac{1}{\epsilon
x^2},1 \biggr\} +\frac{1}{\epsilon f(x)^2} \biggr) \biggr]\\
&\leq& E[f(x)+x]E \biggl[\max\biggl\{\frac{1}{\epsilon
x^2},1 \biggr\} +\frac{1}{\epsilon f(x)^2} \biggr],
\end{eqnarray*}
the last line following from the FKG inequality
(see, for example,\ Theorem 3.17 of \cite{MadrasMCMC})
since $1/f(x)^2$ is a decreasing function of the random
variable $f(x)$. From
%
\begin{equation}\label{eq:C1}
x+E(f(x)) \leq x+\frac{\alpha+{J}/{2}}{\Sigma_0}
\end{equation}
and (using equation (\ref{eq-Gcalc}) with $p=-2>-(\alpha+J/2)$)
\begin{eqnarray*}
E (f(x)^{-2} ) &=&
E \biggl[ \biggl(\Sigma_{0}+\frac{J}{2} \biggl(\frac{\bar{Y}K}{xJ+K}-
\frac{Z}{\sqrt{xJ+K}} \biggr)^{2} \biggr)^{2} \biggr] E(G^{-2})  \\
&=&\biggl(\Sigma_{0}^{2}+J\Sigma_0E \biggl[ \biggl(\frac{\bar{Y}K}{xJ+K}-\frac
{Z}{\sqrt{xJ+K}} \biggr)^{2} \biggr]\\
&&\hphantom{\biggl(}
{}+\frac{J^2}{4}E\biggl [ \biggl(\frac{\bar{Y}K}{xJ+K}-\frac
{Z}{\sqrt{xJ+K}} \biggr)^{4} \biggr]\biggr)\\
&&{}\Big/\bigl((\alpha+J/2-1)(\alpha+J/2-2) \bigr) ,
\end{eqnarray*}
and calculation of the expectations in the brackets in the above expression,
we find that $\hat{C}_{1,\epsilon,x}$ is an upper bound for
$C_{1,\epsilon,x}$.

(ii) If $r_2<1$, then $\phi(x)=1$ is a drift function with
rate $r_2$.
Hence,
Theorem \ref{theo:steinsaltz} implies that $d_{\mathrm{W}}(P_1^{n}(x,\cdot),\pi
_1)\leq
\frac{C_{2,x}}{1-r_{2}} r_{2}^{n}$,
and $C_{2,x} = E(|f(x)-x|)\leq x+\frac{\alpha+{J}/{2}}{\Sigma_0}$ by
equation (\ref{eq:C1}).

(iii) If $r_{3, \epsilon}<1$, then $
d_{\mathrm{W}}(P_K^{n}(x,\cdot),\pi_1) \leq
\frac{C_{3,\epsilon,x}}{1-r_{1,\epsilon}} r_{1,\epsilon}^{n}$\vspace*{-2pt}
and $C_{3,\epsilon,x} \leq E[f(x)+x]\sup_y(\phi_{3,\epsilon}(y))
\leq\hat{C}_{3,\epsilon,x}$
because of (\ref{eq:C1}) and the fact that $\sup_y(\phi_{3,\epsilon
}(y))=\max\{1, \frac{J(2|\bar{Y}|+1)}{\epsilon \sqrt{2\uppi}} \}$.
\end{pf}

\begin{remarks*}
(1) The criterion $r_2<1$ is essentially the condition that
$\log\sup_x E(D_xf)<0$. This is similar to the
strong contractivity condition which says that $E(\log\sup_x D_xf)<0$.
Logically, neither condition implies the other.
Each implies the weaker condition
$\sup_{x,y}E(\log[\rho(f(x),f(y))/\rho(x,y)]) < 0$
used in \cite{BE} to prove attractivity (in a more restrictive setting).

(2) In the Bayesian model, as the number of observations $J$ increases,
$\bar{Y}$ and $\Sigma_0/J$ both converge (to $\theta$ and $\sigma^2$,
respectively). Therefore, for large $J$, we expect $r_1$ to be small, but
$r_2$ and $r_3$ to be large.

(3) ($K=1$) To illustrate the calculations in the preceding propositions,
we considered some cases with $5\leq J\leq10$, $\alpha=1$,
$0.5\leq\bar{Y} \leq1.5$ and $5\leq\Sigma_0\leq60$.
As shown in Table \ref{table1}, it is possible for any one of
$r_1$, $r_2$ or $r_3$ to be less than the other two.

\begin{table}[b]
\tablewidth=280pt
\caption{Values of $r_1$, $r_2$ and $r_3$ in three cases of the
Normal Gibbs sampler with $K=1$. Observe that $r_2$ is best in case A,
$r_1$ in case B and $r_3$ in case C. Numbers with ``\ldots'' have
had trailing digits truncated; other numbers are exact}\label{table1}
\begin{tabular*}{280pt}{@{\extracolsep{\fill}}llllllll@{}}
\hline
Case & $J$ & $\alpha$ & $\bar{Y}$ &
$\Sigma_0$ & $r_1$ & $r_2$ & $r_3$ \\
\hline
A & 10 & 1 & 1.5 & 60 & 1 & $5/6$ & 0.97\ldots\\
B & \hphantom{0}5 & 1 & 0.5 & \hphantom{0}5 & 0.6 & 5.25 & 1.02\ldots\\
C & \hphantom{0}5 & 1 & 1 & 12 & 1.2& 1.82\ldots& 0.9368\ldots\\
\hline
\end{tabular*}
\end{table}

(a) In case A, we have $r_2=5/6$ and $\hat{C}_{2,x}=x+0.1$. Hence,
for $x=1$, we have
\[
d_{\mathrm{W}}(P_1^{n}(1,\cdot),\pi_1) \leq 6.6*(5/6)^n
\qquad\mbox{for $n\geq1$ in case A.}
\]
In particular, $d_{\mathrm{W}}(P_1^{n}(1,\cdot),\pi_1)<0.01$ for $n\geq36$ in
case A.

(b) For case B, we have $r_1=0.6$ and $A=0.21$. We want to have
$r_{1,\epsilon}<1$, where $r_{1,\epsilon}=0.6+0.21\epsilon$.
Suppose we choose
$\epsilon=0.5$. Then $r_{1,\epsilon}=0.705$ and
$\hat{C}_{1,\epsilon,x}< (16+\max\{1,2x^{-2}\})(x+0.7)$ for all $x>0$.
For $x=1$, we obtain
\[
d_{\mathrm{W}}(P_1^{n}(1,\cdot),\pi_1) \leq 104*0.705^n
\qquad\mbox{for $n\geq1$ in case B.}
\]
In particular, $d_{\mathrm{W}}(P_1^{n}(1,\cdot),\pi_1)<0.01$ for $n\geq27$ in
case B.

(c) In case C, we have $r_3<0.9369$ and $\hat{A}<0.305$. Choosing
$\epsilon=0.01$ gives $r_{3,\epsilon}< 0.94$ and
$\hat{C}_{3,\epsilon,x}<599(x+0.3)$. For $x=1$, we obtain
\[
d_{\mathrm{W}}(P_1^{n}(1,\cdot),\pi_1) \leq 12980*0.94^n
\qquad\mbox{for $n\geq1$ in case C.}
\]
Therefore, $d_{\mathrm{W}}(P_1^{n}(1,\cdot),\pi_1)<0.01$ for $n\geq228$ in case C.

(4) ($K=0$) Consider the three cases of Table \ref{table1}, but now
using the prior distribution with \mbox{$K=0$}.
Table \ref{table2} gives
the calculations of Propositions \ref{theo:driftK1}(i) and
\ref{prop-K1bd}(i) (note that $r_1= 1/[2\alpha+J-2]$);
the last column is the bound on the Wasserstein
distance from equilibrium after $n$ iterations, started from $x=1$. We
find that $d_{\mathrm{W}}(P_0^{n}(1,\cdot),\pi_0)<0.01$ for $n\geq5$
in case A and for $n\geq6$ in cases B and C.

\begin{table}
\caption{Values of expressions from Propositions
\protect\ref{theo:driftK1}(i) and \protect\ref{prop-K1bd}(i) for the
Normal Gibbs sampler with $K=0$, for the cases given in Table
\protect\ref{table1}. The values of $\epsilon$ were chosen somewhat
arbitrarily. We use $x=1$ in all cases. Numbers with ``\ldots'' have
had trailing digits truncated; other numbers are exact}\label{table2}
\begin{tabular*}{\textwidth}{@{\extracolsep{\fill}}lllllll@{}}
\hline
Case & $r_1$ & $A$ & $\epsilon$ & $r_{1,\epsilon}$
& $\hat{C}_{1,\epsilon,x}$ & $d_{\mathrm{W}}(P_0^{n}(1,\cdot),\pi_0) \leq$ \\
\hline
A & 0.1 & $1/1200$ & 1 & 0.1008\ldots& 202.4\ldots& $226*(0.101)^n$ \\
B & 0.2 & 0.07 & 0.5 & 0.235 & \hphantom{0}31.28\ldots& \hphantom{0}$40.9*(0.235)^n$ \\
C & 0.2 & 0.012\ldots& 1 & 0.212\ldots& \hphantom{0}55.28\ldots&
\hphantom{0}$70.3*(0.213)^n$ \\
\hline
\end{tabular*}
\end{table}

\cut{Now we consider the case $K=0$. We will take a similar approach to
the one we took for Proposition~\ref{theo:driftK1} part (ii). Let
$k_{x}$ be the joint distribution
of $f(x)$ and $\frac{Z^{2}}{G}$, and let $K_{x}$ be the following
kernel:
\[
K_{x}(dc)=\int\frac{y}{2} k_{x}(dc,dy).
\]
Recall we have
\begin{eqnarray*}
\mathcal{L}(\phi)(x)&=&
E [\phi(\frac{G}{\Sigma_0+\frac{Z^{2}}{2x}} )
\frac{G Z^{2}}{2x^{2} (\Sigma_0+ \frac{Z^{2}}{2x} )^{2}} ].
\end{eqnarray*}
Note
\begin{eqnarray*}
\mathcal{L}(\phi)(x)&=&\frac{1}{h(x)}\int\phi(c)h(c)K_{x}(dc)
\end{eqnarray*}
where $h(c)=c^2$. Let $\mathcal{L}_1$ be the operator defined by
\[
\mathcal{L}_1\phi(x):=\int\phi(c)K_{x}(dc)
\]
and let $\mathcal{L}_2=\mathcal{L}$. By Lemma~\ref{lem:switch}, we see
that if $\phi$ is an $r$-sub-eigenfunction for $\mathcal{L}_1$ then
$\frac{\phi}{h}$ is an $r$-sub-eigenfunction for
$\mathcal{L}_2=\mathcal{L}$. Since
\[
r := \sup_{x}\int_{0}^{\infty} K_{x}(dc) =
E (\frac{Z^{2}}{2G} ) = \frac{1}{2\alpha+J-2} < 1
\]
(provided that $2\alpha+J>3$), $\phi(x)=1$ is an $r$-sub-eigenfunction
for $\mathcal{L}_1$, and hence
$\tilde{\phi}(x)={x^{-2}}$ is an $r$-sub-eigenfunction for
$\mathcal{L}$.

\begin{proposition} \label{prop-phiK0}
($K=0$)
Consider the Gibbs Sampler chain of Example 1 with $K=0$.
Assume that $2\alpha+J>4$. Let
\[
r = \frac{1}{2\alpha+J-2}\mbox{ and }
A = \frac{2\alpha+J}{4 \Sigma_0^2} ,
\]
Observe that $r<1$. Fix $\epsilon\in(0,(1-r)/A)$ and let
$r_\epsilon=r+\epsilon A$, so that $r<r_{\epsilon}<1$. Then for all $x>0$
and all $n\geq1$,
\[
d_{\mathrm{W}}(P_0^n(x,\cdot),\pi_0)\leq\frac{\hat C_{\epsilon,x}}{1-r_\epsilon}
r_\epsilon^n
\]
where
\[
\hat C_{\epsilon,x} = (\max\{\frac{1}{\epsilon x^2},1 \}
+\frac{\Sigma_0^2+\frac{\Sigma_0}{x}+\frac{3}{4x^2}}{\epsilon(\alpha+J/2-1)
(\alpha+J/2-2)} ) (x+\frac{\alpha+J/2}{\Sigma_0} ) .
\]
\end{proposition}

\begin{proof}
We have already shown that if $2\alpha+J>3$, then
$\tilde{\phi}(x)=x^{-2}$ is an $r$-sub-eigenfunction for
$\mathcal{L}$. Let
$\phi_{\epsilon}=\frac{1}{\epsilon}\max\{x^{-2},\epsilon\}$. Since
\[
\sup_{x}E [\frac{D_{x}f}{\tilde{\phi}(x)} ] =
\sup_{x}E [\frac{GZ^2}{2 (\Sigma_0+\frac{Z^2}{2x} )^2} ]
= \frac{2\alpha+J}{4 \Sigma_0^{2}},
\]
Lemma~\ref{lem:trunc} implies that $\phi_{\epsilon}$ is a drift function
with growth rate $r_{\epsilon}$.
By Theorem \ref{theo:steinsaltz} with $y=x$,
\[
d_{\mathrm{W}}(P^{n}(x,\cdot),\pi) \leq
\frac{C_{\epsilon,x}}{1-r_{\epsilon}} r_{\epsilon}^{n}
\]
where
\[
C_{\epsilon,x} = E [
|f(x)-x|\sup_{t\in[0,1]}\{\phi_{\epsilon}(x+t(f(x)-x))\} ].
\]
Note that
\begin{eqnarray*}
C_{\epsilon,x}&\leq&E [(f(x)+x)\max\{\frac{1}{\epsilon
x^2},\frac{1}{\epsilon f(x)^2},1 \} ]\\
&\leq&E [(f(x)+x) (\max\{\frac{1}{\epsilon
x^2},1 \} +\frac{1}{\epsilon f(x)^2} ) ]\\
&\leq& E[f(x)+x]E [\max\{\frac{1}{\epsilon
x^2},1 \} +\frac{1}{\epsilon f(x)^2} ]
\end{eqnarray*}
where the last line again follows from the FKG inequality.
Using the inequality (\ref{eq:C1}) and carrying out a similar
calculation as in (\ref{eq:C2}) for $E[\frac{1}{f(x)^2}]$, we get that
$\hat{C}_{\epsilon,x}$ is an upper bound for the right side of the
inequality in the last line above. Hence the proof is complete.
\end{proof}

\smallskip
\noindent
\textbf{Remarks}:
\\
(1) The condition $2\alpha+J>4$ is needed only to ensure
that $E(G^{-2})$ is finite.

\smallskip
\noindent
(2) Consider the three examples of Table \ref{table1}, but now
consider the prior distribution with $K=0$.
The calculations of Proposition \ref{prop-phiK0} are given in
Table \ref{table2}; the last column is the bound on the Wasserstein
distance from equilibrium after $n$ iterations, started from $x=1$.
In particular, we find that $d_{\mathrm{W}}(P_0^{n}(1,\cdot),\pi_0)<0.01$ for
$n\geq5$
in Case A, and for $n\geq6$ in Cases B and C.
%
\begin{table}
\begin{center}
\begin{tabular}{|c|ccccc|c|}
\hline
Case & $r$ & $A$ & $\epsilon$ & $r_{\epsilon}$ & $\hat{C}_{\epsilon,x}$
& $d_{\mathrm{W}}(P_0^{n}(1,\cdot),\pi_0) \leq$ \\
\hline
A & 0.1 & 1/1200 & 1 & 0.1008\ldots& 202.4\ldots& $226*(0.101)^n$ \\
B & 0.2 & 0.07 & 0.5 & 0.235 & 31.28\ldots& $40.9*(0.235)^n$ \\
C & 0.2 & 0.012\ldots& 1 & 0.212\ldots& 55.28\ldots&
$70.3*(0.213)^n$ \\
\hline
\end{tabular}
\end{center}
\caption{
\label{table2}
Values of expressions from Proposition \protect\ref{prop-phiK0} for the
Normal Gibbs Sampler ($K=0$), for the cases given in Table \protect\ref{table1}.
The values of $\epsilon$ were chosen somewhat arbitrarily.
We use $x=1$ in all cases.
Numbers with ``\ldots'' have
had trailing digits truncated; other numbers are exact.}
\end{table}
}
\end{remarks*}

\section{From Wasserstein distance to total variation distance}
\label{sec-WTV}
\subsection{One-shot coupling}\label{sec4.1}
In this section, we present Theorem \ref{prop-WTVgen}, our main tool
for converting Wasserstein convergence rates to total variation
convergence rates.
Various methods of coupling
have been used for proving convergence in TV distance
\cite{Diac,Jerrum,RobRos04}.
Although not explicit in the final formulation,
the idea behind this theorem is a certain kind of coupling
method, called \textit{one-shot coupling}, which has been successfully
applied to iterated function systems by Roberts and
Rosenthal \cite{RobRos02} (see also \cite{BesRob,Huber}).
We describe this method now.

We shall consider two copies of a Markov chain, running
simultaneously. Let $S_0$ and $\tilde{S}_0$ be two initial values
for this chain (possibly random with some joint distribution). Let
$\{f_t\}$ be a sequence of i.i.d.\ random maps that defines this
Markov chain. Define
\[
S_t = f_t(S_{t-1}) \quad\mbox{and}\quad
\tilde{S}_t = f_t(\tilde{S}_{t-1})
\qquad\mbox{for $t=1,\ldots,n-1$.}
\]
That is, we use the same realization of the functions $f_t$ on\vspace*{-2pt}
both copies of the chains, up to time $n-1$.
Suppose, at time $n$, we can find two
copies $\hat{f}_{n}$ and $\hat{\hat{f}_n\hphantom{\,}}$ of $f_n$, that are independent
from everything earlier (but not independent of each other),\vspace*{-2pt} such that,
with high probability, we have
$\hat{f}_n(S_{n-1})=\hat{\hat{f}_n\hphantom{\,}}(\tilde{S}_{n-1})$.
(The name ``one-shot coupling'' refers to the fact that we only try to
coalesce the two copies of the chain at the single time $n$.)
By the representation (\ref{eq-TVcouprep}), this would imply
that $S_{n}$ and $\tilde{S}_n$ are close to each other in TV distance.\vspace*{-2pt}
Two conditions help us to find such $\hat{f}_{n}$ and
$\hat{\hat{f}_n\hphantom{\,}}\!$: first, $S_{n-1}$ and $\tilde{S}_{n-1}$ need to be
reasonably close; second, the density functions of
the two random variables $f_t(x)$ and
$f_t(y)$ need to have a large overlap when $x$ and $y$ are close.
Theorem \ref{prop-WTVgen} is a precise refinement of this argument.

In what follows, let $(\chi,\rho)$ be a complete separable metric
space and let $P$ be a transition probability operator on the state space
$\chi$. Assume that $P$ has a density $p$ with respect to some
reference measure $\lambda$ (that is, $P(x,\mathrm{d}z)=p(x,z) \lambda(\mathrm{d}z)$).
Let $\mu$ be any probability distribution on~${\chi}$ and let $\pi$
be a stationary probability distribution for $P$.

\begin{theorem}
\label{prop-WTVgen}
\textup{(a)} Assume that there is a constant $A$ such that
%
\begin{equation}
\label{eq.QTVprop0}
\int_{\chi}|p(x,z) - p(y,z)| \lambda(\mathrm{d}z) \leq A \rho(x,y)
\qquad\mbox{for all $x,y\in{\chi}$}.
\end{equation}
Then
\[
d_{\mathrm{TV}}(\mu P^n,\pi) \leq \frac{A}{2} d_{\mathrm{W}}(\mu P^{n-1},\pi)
\qquad\mbox{for all $n\geq1$}.
\]
\begin{enumerate}[(a)]
\item[(b)] Assume the following conditions hold:
\begin{longlist}[(ii)]
\item[(i)] there exists a function $h>0$ on $\chi$ such that
%
\begin{equation}
\label{eq.QTVprop1}
\int_{\chi}|p(x,z) - p(y,z)| \lambda(\mathrm{d}z) \leq
\frac{\rho(x,y)}{\max\{h(x),h(y)\}}\qquad
\mbox{for all $x,y\in{\chi}$; }
\end{equation}

\item[(ii)] there exist positive constants $B$, $q$ and $\epsilon_0$ such that
%
\begin{equation}
\label{eq.QTVprop2}
\pi\bigl(\{y \dvtx h(y)<\epsilon\} \bigr) \leq B\epsilon^q
\qquad\mbox{for all $\epsilon$ in $(0,\epsilon_0)$}.
\end{equation}
%
Let $\tilde{C} = (2q)^{-q/(1+q)}
\max\{(q+1)B^{1/(1+q)},(B^q\epsilon_0)^{-1/(1+q)}\}$. Then
%
\begin{equation}
\label{eq-LTVWrel}
d_{\mathrm{TV}}(\mu P^n,\pi) \leq \tilde{C}
[ d_{\mathrm{W}}(\mu P^{n-1},\pi) ]^{q/(1+q)}
\qquad\mbox{for all $n\geq1$}.
\end{equation}
\end{longlist}
\end{enumerate}
\end{theorem}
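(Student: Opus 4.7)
Both parts rest on the same idea: compare $\mu P^n$ to $\pi$ by combining an optimal Wasserstein coupling of $\mu P^{n-1}$ and $\pi$ with one additional Markov step. Since $\pi P = \pi$, for any $M\in\operatorname{Joint}(\mu P^{n-1},\pi)$, both $\mu P^n$ and $\pi$ can be written as integrals of $p(\cdot,z)$ against $M$, so
\[
(\mu P^n - \pi)(dz) \;=\; \int_{\chi}\int_{\chi} \bigl(p(x,z)-p(y,z)\bigr)\, M(dx,dy)\,\lambda(dz).
\]
Equation~(\ref{eq-TVpdf}) of Proposition~\ref{prop-Wbasic} together with Fubini then yields
\[
d_{\mathrm{TV}}(\mu P^n,\pi) \;\leq\; \frac{1}{2}\int_{\chi}\int_{\chi}\biggl[\int_{\chi}|p(x,z)-p(y,z)|\,\lambda(dz)\biggr] M(dx,dy).
\]
Choosing $M$ to attain the Wasserstein infimum is the common first move for both parts.

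For part~(a), hypothesis~(\ref{eq.QTVprop0}) bounds the bracketed integral by $A\rho(x,y)$, after which optimality of $M$ turns the outer double integral into $d_{\mathrm{W}}(\mu P^{n-1},\pi)$ and the claim follows in a single line.

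For part~(b), I would combine hypothesis~(\ref{eq.QTVprop1}) with the trivial bound $\int_{\chi}|p(x,z)-p(y,z)|\lambda(dz)\leq 2$. For a free parameter $\epsilon\in(0,\epsilon_0)$, split the outer integral on whether $h(y)<\epsilon$: on $\{h(y)\geq\epsilon\}$ the inner integral is at most $\rho(x,y)/\epsilon$, while on $\{h(y)<\epsilon\}$ it is at most $2$. Because the $y$-marginal of $M$ is $\pi$, hypothesis~(\ref{eq.QTVprop2}) gives
\[
d_{\mathrm{TV}}(\mu P^n,\pi) \;\leq\; \frac{d_{\mathrm{W}}(\mu P^{n-1},\pi)}{2\epsilon}+B\epsilon^{q}.
\]
A one-variable calculus exercise produces the interior minimizer $\epsilon^{\ast}=(d_{\mathrm{W}}(\mu P^{n-1},\pi)/(2Bq))^{1/(q+1)}$, and substituting gives a bound proportional to $d_{\mathrm{W}}^{q/(q+1)}$ with constant $(q+1)(2q)^{-q/(q+1)}B^{1/(q+1)}$, which matches the first entry in the $\max$ defining $\tilde{C}$.

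The only nuisance is the constraint $\epsilon<\epsilon_0$ imposed by~(\ref{eq.QTVprop2}). When $\epsilon^{\ast}\geq\epsilon_0$, equivalently $d_{\mathrm{W}}(\mu P^{n-1},\pi)\geq 2Bq\epsilon_0^{q+1}$, the interior optimum is out of reach and one must either let $\epsilon\uparrow\epsilon_0$ (absorbing $B\epsilon_0^{q}$ into the $d_{\mathrm{W}}/(2\epsilon_0)$ term) or fall back on the trivial bound $d_{\mathrm{TV}}\leq 1$. The second entry $(B^q\epsilon_0)^{-1/(q+1)}$ inside the $\max$ of $\tilde{C}$ is engineered precisely so that $\tilde{C}\,d_{\mathrm{W}}^{q/(q+1)}$ dominates~$1$ throughout that large-$d_{\mathrm{W}}$ regime, and taking the maximum of the two candidate constants then supplies a single $\tilde{C}$ valid uniformly in $d_{\mathrm{W}}$. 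This bookkeeping of the boundary case is the only genuine obstacle; the rest is the coupling-and-Fubini schema sketched above.
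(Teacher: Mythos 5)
Your proposal is correct and follows essentially the same route as the paper, which packages the coupling-and-Fubini step as Lemma~\ref{lem-wasser} and splits the outer integral on $\max\{h(x),h(y)\}$ rather than on $h(y)$ alone, but these are cosmetic differences leading to the same estimate $d_{\mathrm{TV}}(\mu P^n,\pi)\leq d_{\mathrm{W}}(\mu P^{n-1},\pi)/(2\epsilon)+B\epsilon^q$ and the identical one-variable minimization. One caution as you carry out the boundary bookkeeping: guaranteeing $\tilde{C}\,d_{\mathrm{W}}^{q/(q+1)}\geq 1$ whenever $d_{\mathrm{W}}\geq 2Bq\epsilon_0^{1+q}$ requires the second entry of the $\max$ to equal $(2Bq\epsilon_0^{1+q})^{-q/(1+q)}=(2q)^{-q/(1+q)}B^{-q/(1+q)}\epsilon_0^{-q}$, which is exactly what the paper's own proof produces, whereas the exponent on $\epsilon_0$ in the displayed formula for $\tilde{C}$ reads $-1/(1+q)$ rather than $-q$ and appears to be a misprint in the theorem statement.
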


\begin{remarks*}
(1) If we also know $\limsup_{n\rightarrow\infty} [ d_{\mathrm{W}}(\mu P^{n},\pi)]^{1/n}
\leq \rho<1$,
then the conditions of Theorem \ref{prop-WTVgen}(b) imply that
$\limsup_{n\rightarrow\infty} [d_{\mathrm{TV}}(\mu P^{n},\pi)]^{1/n}
\leq \rho^{q/(1+q)}$.
{\smallskipamount=0pt
\begin{longlist}[(3)]
\item[(2)] Observe that condition (\ref{eq.QTVprop0}) should not
be expected to hold uniformly for $x$ and $y$ near 0 in
the random logistic model. Indeed, as $x$ decreases to 0, the
density of $f_t(x)$ becomes more and more peaked near 0. Essentially,
this is because 0 is a fixed point of the continuous random function $f_t$.
The same thing happens in the Gibbs sampler example when $K$ is 0.

\item[(3)] Lemma \ref{lem-gABC} will be useful in obtaining bounds of the
form (\ref{eq.QTVprop0}) or (\ref{eq.QTVprop1}).
\end{longlist}}
\end{remarks*}

Our first step in proving the above theorem is the following
calculation.

\begin{lemma}
\label{lem-wasser}
Let $\eta$ and $\nu$ be probability measures on $\chi$. Let $\Psi$
be a probability measure in $\operatorname{Joint}(\eta,\nu)$. Then
%
\begin{equation}
\label{eq-tvphi}
d_{\mathrm{TV}}(\eta P,\nu P) \leq \frac{1}{2} \int_x \int_y \int_z
|p(x,z)-p(y,z)| \lambda(\mathrm{d}z) \Psi(\mathrm{d}x,\mathrm{d}y) .
\end{equation}
\end{lemma}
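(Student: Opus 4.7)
The plan is to apply the density formulation of total variation distance from Proposition \ref{prop-Wbasic}, combined with the marginal property of the coupling $\Psi$ and a straightforward application of the triangle inequality under the integral.

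First I would write the measures $\eta P$ and $\nu P$ in terms of densities with respect to $\lambda$. Since $P(x,\mathrm{d}z) = p(x,z)\lambda(\mathrm{d}z)$, the measure $\eta P$ has $\lambda$-density $z \mapsto \int_x p(x,z)\,\eta(\mathrm{d}x)$, and similarly for $\nu P$. Because $\Psi \in \operatorname{Joint}(\eta,\nu)$ has marginals $\eta$ and $\nu$, I can rewrite these densities as double integrals against $\Psi$: the density of $\eta P$ at $z$ equals $\int\int p(x,z)\,\Psi(\mathrm{d}x,\mathrm{d}y)$, and the density of $\nu P$ at $z$ equals $\int\int p(y,z)\,\Psi(\mathrm{d}x,\mathrm{d}y)$.

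Next I would invoke equation (\ref{eq-TVpdf}) of Proposition \ref{prop-Wbasic}, which gives
\[
d_{\mathrm{TV}}(\eta P,\nu P) = \frac{1}{2}\int_z \biggl| \int\int \bigl(p(x,z)-p(y,z)\bigr)\,\Psi(\mathrm{d}x,\mathrm{d}y) \biggr|\,\lambda(\mathrm{d}z).
\]
Pulling the absolute value inside the $\Psi$-integral via the triangle inequality and then swapping the order of integration by Tonelli's theorem (the integrand is nonnegative) yields the desired bound
\[
d_{\mathrm{TV}}(\eta P,\nu P) \leq \frac{1}{2}\int_x\int_y \int_z |p(x,z)-p(y,z)|\,\lambda(\mathrm{d}z)\,\Psi(\mathrm{d}x,\mathrm{d}y).
\]

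There is essentially no obstacle here; the statement is a clean application of the density representation of TV distance and the definition of a coupling. The one point to be mildly careful about is justifying Tonelli, but since $|p(x,z)-p(y,z)|\lambda(\mathrm{d}z)$ is finite for each $(x,y)$ (being at most $2$), and $\Psi$ is a probability measure, measurability and integrability are immediate.
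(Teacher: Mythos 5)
Your proof is correct and follows essentially the same route as the paper: expressing the densities of $\eta P$ and $\nu P$ via the marginals of $\Psi$, invoking (\ref{eq-TVpdf}), and then moving the absolute value inside the $\Psi$-integral by the triangle inequality. The only cosmetic difference is that you make the Tonelli step explicit, which the paper leaves implicit.
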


\begin{pf}
Since $(\eta P)(\mathrm{d}z) = (\int_x\eta(\mathrm{d}x) p(x,z) ) \lambda(\mathrm{d}z)$
and similarly for $\nu P$, we apply equation (\ref{eq-TVpdf}) to obtain
\begin{eqnarray*}
d_{\mathrm{TV}}(\eta P,\nu P) & = & \frac{1}{2} \int_z
\biggl| \int_x\eta(\mathrm{d}x) p(x,z) - \int_y\nu(\mathrm{d}y) p(y,z) \biggr| \lambda(\mathrm{d}z)
\\
& = & \frac{1}{2} \int_z \biggl|
\int_x\int_y p(x,z) \Psi(\mathrm{d}x,\mathrm{d}y) - \int_x\int_y p(y,z) \Psi(\mathrm{d}x,\mathrm{d}y)
\biggr| \lambda(\mathrm{d}z)
\\
& \leq& \frac{1}{2} \int\!\!\int\!\!\int |p(x,z) - p(y,z) |
\lambda(\mathrm{d}z) \Psi(\mathrm{d}x,\mathrm{d}y) .
\end{eqnarray*}
\upqed
\end{pf}

\begin{pf*}{Proof of Theorem \ref{prop-WTVgen}}
We shall apply Lemma \ref{lem-wasser} with $\eta=\mu P^{n-1}$ and $\nu
=\pi$
($=\pi P$). Recall from Section \ref{sec-metrics} that there is a
probability measure $\Psi\equiv\Psi_{\eta,\nu}$
in $\operatorname{Joint}(\eta,\nu)$ such that
$ d_{\mathrm{W}}(\eta,\nu) = \int_x\int_y \rho(x,y) \Psi(\mathrm{d}x,\mathrm{d}y)$.
The proof of part (a) follows immediately.

For part (b), let $\epsilon>0$.
Observe that the left-hand side of equation (\ref{eq.QTVprop1}) is never
greater than 2. Lemma \ref{lem-wasser} and the assumption (\ref
{eq.QTVprop1}) then imply that
%
\begin{equation}
\label{eq-LTVAB1}
d_{\mathrm{TV}}(\eta P,\nu P) \leq I_A + I_B ,
\end{equation}
where
\[
I_A = \frac{1}{2}
\int\int_{\{x,y \dvtx \max\{h(x),h(y)\}\geq\epsilon\}}
\frac{\rho(x,y)}{\max\{h(x),h(y)\}} \Psi(\mathrm{d}x,\mathrm{d}y)
\]
and
\[
I_B = \int\int_{\{x,y \dvtx \max\{h(x),h(y)\}<\epsilon\}}
1 \Psi(\mathrm{d}x,\mathrm{d}y).
\]
Note that
\[
I_A \leq \frac{1}{2}
\int\int_{\{x,y\dvtx \max\{h(x),h(y)\}\geq\epsilon\}}
\frac{\rho(x,y)}{\epsilon} \Psi(\mathrm{d}x,\mathrm{d}y)
\leq \frac{d_{\mathrm{W}}(\mu,\nu)}{2 \epsilon}
\]
and $I_B \leq \pi(\{ y \dvtx h(y)< \epsilon \}) $.
Combining these bounds with the assumption (\ref{eq.QTVprop2}) tells us that
\[
d_{\mathrm{TV}}(\mu P^n,\pi) \leq \frac{d_{\mathrm{W}}(\mu P^{n-1},\pi)}{2 \epsilon}
+ B \epsilon^q
\qquad\mbox{for all }\epsilon\in(0,\epsilon_0).
\]

Let $A_n=d_{\mathrm{W}}(\mu P^{n-1},\pi)$ and consider the function
$G_n(\epsilon)=A_n/(2\epsilon)+B\epsilon^q$. Simple calculus shows
that $G_n$ is minimized at $ \epsilon_n := ( \frac{A_n}{2Bq} )^{1/(1+q)}$\vspace*{-3pt}
and the minimum value of the function is
$ G_n(\epsilon_n) = C_{Bq}A_n^{q/(1+q)}$, where
$C_{Bq} = (q+1)(Bq^{-q}2^{-q})^{1/(1+q)}$.
Let $\alpha_0 =2Bq \epsilon_0^{1+q}$. If $A_n<\alpha_0$, then
$\epsilon_n<\epsilon_0$, so $d_{\mathrm{TV}}(\mu P^n,\pi) \leq G_n(\epsilon_n)$.
If $A_n\geq\alpha_0$, then, trivially, $d_{\mathrm{TV}}(\mu P^n,\pi) \leq1\leq
\alpha_0^{-q/(1+q)}A_n^{q/(1+q)}$. Thus equation (\ref{eq-LTVWrel})
holds with $\tilde{C}=\max\{C_{Bq},\alpha_0^{-q/(1+q)}\}$.
\end{pf*}

\subsection{Example 1: Normal Gibbs sampler}\label{sec4.2}

We return to the Gibbs sampler example described in Section 1.
Recall that we write
$P_K$, $p_K$ and $\pi_K$ to denote the corresponding
transition kernel, density and stationary distribution, where
$K\in\{0,1\}$, without loss of generality.

\begin{proposition}
\label{prop-GSTVW}
Let $\mu$ be
an arbitrary initial probability distribution on $(0,\infty)$. Then
%
\begin{equation}
\label{eq-GSTVW1}
d_{\mathrm{TV}}(\mu P_1^n,\pi_1) \leq
\frac{J}{2} \biggl(1+\frac{|\bar{Y}|}{\sqrt{2\uppi}}
\biggr) d_{\mathrm{W}}(\mu P_1^{n-1},\pi_1)\qquad
\mbox{for $n=1,2,\ldots$}
\end{equation}
and
%
\begin{equation}
\label{eq-GSTVW0}
d_{\mathrm{TV}}(\mu P_0^n,\pi_0) \leq \tilde{C} d_{\mathrm{W}}(\mu P_0^{n-1},\pi_0)^w
\qquad\mbox{for $n =1,2,\ldots,$}
\end{equation}
where
\[
w = \frac{2\alpha+J-1}{2\alpha+J+1}
\]
and
\[
\tilde{C} = \biggl(\alpha+\frac{J+1}{2} \biggr)
\mathrm{e}^{(1-w)\Sigma_0}(2\alpha+J-1)^{-w}.
\]
\end{proposition}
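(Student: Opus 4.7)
Both parts follow from Theorem~\ref{prop-WTVgen} applied to the Gibbs-sampler kernel $P_K$. The common starting point is (\ref{GS.A1th})--(\ref{GS.A2}): $f_t(s) = g(G_t, \theta_t)$ with $g(G,\theta) = G/[\Sigma_0 + (J/2)(\bar Y-\theta)^2]$, where given $S_{t-1}=s$ the Gaussian factor $\theta_t \sim N(m_K(s), 1/(sJ+K))$ (with $m_K(s) = sJ\bar Y/(sJ+K)$) is independent of $G_t \sim \Gamma(\alpha+J/2,1)$. Lemma~\ref{lem-gABC} then gives the basic estimate $d_{\mathrm{TV}}(f_t(x), f_t(y)) \leq d_{\mathrm{TV}}(\theta_t|_x, \theta_t|_y)$, reducing both parts to a Gaussian-TV computation.

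For part~(i), I apply Theorem~\ref{prop-WTVgen}(a) with $A = J(1 + |\bar Y|/\sqrt{2\pi})$. I bound $d_{\mathrm{TV}}(\theta_t|_x, \theta_t|_y)$ via the triangle inequality through the intermediate Gaussian $N(m(y), 1/(xJ+1))$: the mean-change part uses the classical inequality $d_{\mathrm{TV}}(N(m_1,\sigma^2), N(m_2,\sigma^2)) \leq |m_1-m_2|/(\sigma\sqrt{2\pi})$ together with $m'(s) = J\bar Y/(sJ+1)^2$, and the variance-change part is handled via the pointwise identity $\int |\partial_\sigma p_{m,\sigma}(\theta)|\,d\theta = E|U^2-1|/\sigma$ with $U \sim N(0,1)$. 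Taking the supremum over $s \geq 0$ and collecting constants produces (\ref{eq.QTVprop0}) with the stated $A$, and Theorem~\ref{prop-WTVgen}(a) then yields (\ref{eq-GSTVW1}).

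For part~(ii), I apply Theorem~\ref{prop-WTVgen}(b) with $h(s) = s$, $q = (2\alpha+J-1)/2$, and $B = e^{\Sigma_0}$. Integrating $\theta$ out of the joint posterior (\ref{GS-post}) (using (\ref{GS.SS})) identifies $\pi_0 = \Gamma(q, \Sigma_0)$, whence
\[
\pi_0([0,\epsilon]) \leq \frac{(\Sigma_0\epsilon)^q}{\Gamma(q+1)} \leq e^{\Sigma_0}\epsilon^q ,
\]
the second inequality following from $\sup_{q>0} (e\Sigma_0/q)^q = e^{\Sigma_0}$ (attained at $q = \Sigma_0$); this verifies (\ref{eq.QTVprop2}). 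With $\epsilon_0$ taken large enough that the first term in the maximum of the conclusion of Theorem~\ref{prop-WTVgen}(b) dominates, one then recovers exactly $\tilde C = (q+1)(2q)^{-w} e^{(1-w)\Sigma_0}$.

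The main obstacle is verifying (\ref{eq.QTVprop1}) for $K = 0$, namely $\int |p_0(x,z) - p_0(y,z)|\,dz \leq |x-y|/\max\{x,y\}$. The Lemma~\ref{lem-gABC} reduction is insufficient here: the two conditional Gaussians $N(\bar Y, 1/(xJ))$ and $N(\bar Y, 1/(yJ))$ have the same mean, so the Gaussian TV distance only produces a bound of order $|x-y|/\min\{x,y\}$, which is the wrong shape. To upgrade to the $1/\max$ form I would exploit the additional smoothing by the Gamma factor $G_t$ that the lemma discards: write $p_0(s,c) = E[q_{V(s)}(c)]$ with $V(s) = Z^2/(2s) \sim \Gamma(1/2, s)$ and $q_v$ the $\Gamma(\alpha+J/2, \Sigma_0 + v)$ density, derive the covariance identity $\partial_s p_0(s,c) = -\mathrm{Cov}(V(s), q_{V(s)}(c))$, and integrate by parts in $v$ to transfer the $s$-derivative onto $q_v$, producing the beneficial factor $\int|\partial_v q_v(c)|\,dc = E|\nu - W|/(\Sigma_0 + v)$ where $W \sim \Gamma(\alpha+J/2, 1)$. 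Combining this with the pointwise inequality $(\Sigma_0 + Z^2/(2x))/(\Sigma_0 + Z^2/(2y)) \leq y/x$ and a careful expectation over $Z$ should then yield the required Lipschitz bound.
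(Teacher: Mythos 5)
Your plan for part (i) is sound and parallels the paper's: decompose the transition density via Lemma~\ref{lem-gABC} into a one-dimensional Gaussian comparison and bound that Gaussian TV distance by splitting off the mean shift and the scale change. The paper handles the scale change by the elementary density estimate in Lemma~\ref{lem-techA}(a) rather than your $\int|\partial_\sigma p_{m,\sigma}|\,\mathrm{d}\theta=E|U^2-1|/\sigma$ identity, but either route delivers a bound of the required Lipschitz form. Your derivation of the $\pi_0$ tail bound is also a genuinely different (and clean) route: you integrate out $\theta$ to identify $\pi_0$ as the exact Gamma law $\Gamma(\alpha+(J-1)/2,\Sigma_0)$ and then use $\Sigma_0^q/\Gamma(q+1)\leq \mathrm{e}^{\Sigma_0}$, whereas the paper's Lemma~\ref{lem-mubound} avoids computing the normalizing constant at all and instead lower-bounds it by truncating the defining double integral; both yield $B=\mathrm{e}^{\Sigma_0}$, $q=\alpha+(J-1)/2$.

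However, the paragraph you flag as ``the main obstacle'' rests on a misconception, and the workaround you sketch is both unnecessary and unverified. You assert that for $K=0$ the Lemma~\ref{lem-gABC} reduction to $d_{\mathrm{TV}}(Z/\sqrt{x},Z/\sqrt{y})$ can only give a bound of shape $|x-y|/\min\{x,y\}$ because the two conditional Gaussians share the mean $\bar Y$. That is false: the paper's Lemma~\ref{lem-techA}(a) shows directly, by a careful pointwise comparison of the two scaled normal densities (replacing $\mathrm{e}^{-at^2/2}$ by $\mathrm{e}^{-bt^2/2}$ on the set where the narrower density dominates), that
\[
d_{\mathrm{TV}}\biggl(\frac{Z}{\sqrt{a}},\frac{Z}{\sqrt{b}}\biggr)\leq\frac{|a-b|}{\max\{a,b\}},
\]
with the $\max$, not the $\min$, in the denominator. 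Applied with $a=x$, $b=y$, this immediately gives the paper's Lemma~\ref{lem-ppbound0}, $d_{\mathrm{TV}}(P_0(x,\cdot),P_0(y,\cdot))\leq|x-y|/\max\{x,y\}$, i.e.\ condition~(\ref{eq.QTVprop1}) with $h(s)=s$. The bound you had in mind, $|x-y|/\min\{x,y\}$, is what you would get by first passing through $|\log x-\log y|$; you lose the decisive sharpness at that step. Consequently the covariance-identity/integration-by-parts machinery you propose (writing $p_0(s,c)=E[q_{V(s)}(c)]$, transferring the $s$-derivative onto $q_v$, etc.) is a detour that is never carried out and is not needed. If you replace that entire paragraph with an appeal to Lemma~\ref{lem-gABC} followed by the $\max$-form Gaussian scale bound, the argument closes exactly as in the paper.
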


Before proceeding, let us revisit the numerical examples of
Table \ref{table1}, as discussed in the remarks following
Proposition \ref{prop-K1bd}.

(a) ($K=1$) If $d_{\mathrm{W}}(\mu P_1^n,\pi_1)\leq Q S^n$ for
some constants $Q$ and $S$, then $d_{\mathrm{TV}}(\mu P_1^n,\pi_1)\leq
\frac{J}{2}(1+|\bar{Y}|/\sqrt{2\uppi})(Q/S)S^n$.
Thus, for the case where $\mu$ is the point mass at $x=1$,
we obtain the following upper bounds on $d_{\mathrm{TV}}(\mu P_1^n,\pi_1)$:
$63.3(5/6)^n$ in case A, $443(0.705)^n$ in case B and
$48\mbox{,}294(0.94)^n$ in case C. Hence, the total variation distance
to equilibrium is less then $0.01$ when $n\geq49$ in case A,
when $n\geq31$ in case B and when $n\geq249$ in case~C.

(b) ($K=0$) We have $w=11/13$ in case A and $w=3/4$ in cases B and C.
Numerical values for $\tilde{C}$ (rounded up) are 8722 in case A,
$3.642$ in case B and $20.96$ in case C. If we know that
$d_{\mathrm{W}}(\mu P_0^n,\pi_0)\leq Q S^n$, then we obtain
$d_{\mathrm{TV}}(\mu P_0^n,\pi_0)\leq\tilde{C}(Q/S)^w(S^{w})^{n}$.
Thus, for the case where $\mu$ is the point mass at $x=1$,
we obtain the following upper bounds on $d_{\mathrm{TV}}(\mu P_0^n,\pi_0)$:
$5\mbox{,}958\mbox{,}000(0.144)^n$ in case A,
$174.6(0.338)^n$ in case B and
$1624(0.314)^n$ in case C.
Therefore, $d_{\mathrm{TV}}(P^n_0(1,\cdot),\pi_0)<0.01$ for
$n\geq11$ in cases A and C, and for $n\geq10$ in case~B.

Logically, the proof of this proposition belongs at the end of this
section since it relies on several lemmas that have not yet been proven.
However, we shall present the proof now since it serves as a guide
for what is to come.

\begin{pf*}{Proof of Proposition \ref{prop-GSTVW}}
Equation (\ref{eq-GSTVW1}) follows from Theorem \ref{prop-WTVgen}(a)
and Lemma \ref{lem-ppbound1} below.
Equation (\ref{eq-GSTVW0}) follows from Theorem \ref{prop-WTVgen}(b)
and Lemmas \ref{lem-ppbound0} and \ref{lem-mubound} below.
In Theorem \ref{prop-WTVgen}(b), we use
$q=\alpha+(J-1)/2$, $B=\mathrm{e}^{\Sigma_0}$ and $\epsilon_0=1$ (all
courtesy of Lemma \ref{lem-mubound}), and it
is not hard to check that, in the definition of $\tilde{C}$,
the first term inside the `$\max$' exceeds the second.
\end{pf*}

The proof of Lemma \ref{lem-mubound} relies on our knowledge of
the explicit form of the equilibrium distribution (which is
known in many MCMC problems).
The proofs of Lemmas \ref{lem-ppbound1} and \ref{lem-ppbound0} rely
heavily on
Lemma \ref{lem-gABC}, together with the following technical lemma.

\begin{lemma}\label{lem-techA}
Let $Z$ be a standard Normal random variable:
\begin{longlist}[(a)]
\item[(a)] Let $a$ and $b$ be positive constants. Then
$ d_{\mathrm{TV}} ( \frac{Z}{\sqrt{a}}, \frac{Z}{\sqrt{b}} )
\leq |a-b|/\max\{a,b\}$.

\item[(b)] Let $t$ be a real constant. Then
$ d_{\mathrm{TV}}(Z,Z+t) \leq |t|/\sqrt{2\uppi} $.
\end{longlist}
\end{lemma}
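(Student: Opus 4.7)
The plan is to handle parts (a) and (b) separately, each time working with the density-based representation of total variation distance from Proposition \ref{prop-Wbasic} and avoiding any explicit computation of error functions or cumulative distribution values.

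For part (a), I would assume without loss of generality that $a\le b$ and write the densities of $Z/\sqrt{a}$ and $Z/\sqrt{b}$ as $p_a(z)=\sqrt{a/(2\uppi)}\exp(-az^2/2)$ and $p_b(z)=\sqrt{b/(2\uppi)}\exp(-bz^2/2)$. The key observation is the pointwise bound on the density ratio: $p_a(z)/p_b(z)=\sqrt{a/b}\,\exp((b-a)z^2/2)\ge\sqrt{a/b}\ge a/b$ for every $z$, since the exponential factor is at least $1$ and $a/b\in(0,1]$. This gives $\min\{p_a(z),p_b(z)\}\ge(a/b)\,p_b(z)$ pointwise, so integrating yields $\int\min\{p_a,p_b\}\,dz\ge a/b$, and formula (\ref{eq-TVpdf2}) delivers $d_{\mathrm{TV}}(Z/\sqrt{a},Z/\sqrt{b})\le 1-a/b=(b-a)/b=|a-b|/\max\{a,b\}$.

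For part (b), use (\ref{eq-TVpdf}) to write $d_{\mathrm{TV}}(Z,Z+t)=\tfrac{1}{2}\int|\phi(z)-\phi(z-t)|\,dz$, where $\phi$ is the standard normal density. Assume $t>0$ (the case $t<0$ is symmetric). By the fundamental theorem of calculus, $|\phi(z)-\phi(z-t)|\le\int_{z-t}^{z}|\phi'(y)|\,dy$, and Fubini swaps the order of integration to yield $\int|\phi(z)-\phi(z-t)|\,dz\le t\int_{-\infty}^{\infty}|\phi'(y)|\,dy$. Since $\phi'(y)=-y\phi(y)$, the remaining integral is $\int|y|\phi(y)\,dy=2\int_0^{\infty}y\phi(y)\,dy=2\phi(0)=\sqrt{2/\uppi}$. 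Multiplying by $1/2$ gives the bound $|t|/\sqrt{2\uppi}$.

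The only conceptual step is spotting, in part (a), the one-line density-ratio inequality that replaces a direct computation of $d_{\mathrm{TV}}$ (which leads to expressions like $2[\Phi(z_0\sqrt{b})-\Phi(z_0\sqrt{a})]$ that obscure the simple bound claimed in the lemma). Once that inequality is in hand, everything is elementary. Part (b) presents no real obstacle: Fubini applies because $|\phi'|$ is integrable, and the identity $\int|y|\phi(y)\,dy=\sqrt{2/\uppi}$ is standard.
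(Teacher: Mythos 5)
Your proof is correct, and on both parts it follows a somewhat different route from the paper's. For part (a), the paper uses the representation $d_{\mathrm{TV}}(\mu_1,\mu_2)=\int_{p_1>p_2}(p_1-p_2)$, and after assuming $a<b$ replaces $\mathrm{e}^{-at^2/2}$ by the smaller $\mathrm{e}^{-bt^2/2}$ inside the integral to obtain the intermediate bound $1-\sqrt{a/b}$, then weakens this to $1-a/b$. You instead invoke the complementary formula $d_{\mathrm{TV}}=1-\int\min\{p_a,p_b\}$ together with the pointwise ratio bound $p_a/p_b\geq\sqrt{a/b}\geq a/b$. These are essentially the same estimate: running your argument with the sharper constant $\sqrt{a/b}$ recovers the paper's intermediate bound $1-\sqrt{a/b}$, and both then relax to the stated $1-a/b$; your presentation is marginally cleaner because it avoids working on the set where one density dominates the other. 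For part (b) the routes genuinely diverge. The paper exploits the symmetry of the Gaussian to evaluate $\min\{\phi(u),\phi(u-t)\}$ explicitly (it equals $\phi(u)$ for $u\geq t/2$ and is symmetric about $u=t/2$), reducing $d_{\mathrm{TV}}(Z,Z+t)$ to $\int_{-t/2}^{t/2}\phi$ and bounding that by $t\,\phi(0)$. You instead apply the general $L^1$-translation estimate: by the fundamental theorem of calculus and Fubini, $\int|\phi(z)-\phi(z-t)|\,\mathrm{d}z\leq|t|\,\|\phi'\|_1$, and then compute $\|\phi'\|_1=\int|y|\phi(y)\,\mathrm{d}y=\sqrt{2/\uppi}$. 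Your part-(b) argument is more portable, since it works verbatim for any differentiable density with integrable derivative, while the paper's is shorter because it leans on the specific symmetry of $\phi$. Both routes produce the same constant $1/\sqrt{2\uppi}$.
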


\begin{pf}
For positive $x$, let
$\phi_x(\cdot)$ be the probability density function of $Z/\sqrt{x}$,
that is, $
\phi_x(t) = \sqrt{\frac{x}{2\uppi}} \mathrm{e}^{-xt^2/2}$
$(t\in\mathbb{R})$.

(a) Without loss of generality, assume that $0<a<b$.
Using equation (\ref{eq-TVpdf1}) and $\mathrm{e}^{-at^2/2}>\mathrm{e}^{-bt^2/2}$, we obtain
\begin{eqnarray*}
d_{\mathrm{TV}} \biggl( \frac{Z}{\sqrt{a}}, \frac{Z}{\sqrt{b}} \biggr) & = &
\int_{t \dvtx \phi_b(t)>\phi_a(t)}
\Biggl(\sqrt{\frac{b}{2\uppi}} \mathrm{e}^{-bt^2/2} -
\sqrt{\frac{a}{2\uppi}} \mathrm{e}^{-at^2/2} \Biggr)\, \mathrm{d}t
\\
& < &
\int_{t \dvtx \phi_b(t)>\phi_a(t)}
\Biggl( \sqrt{\frac{b}{2\uppi}} \mathrm{e}^{-bt^2/2} -
\sqrt{\frac{a}{2\uppi}} \mathrm{e}^{-bt^2/2} \Biggr) \,\mathrm{d}t
\\
& \leq& \bigl(\sqrt{b} - \sqrt{a}\bigr)
\int_{-\infty}^{\infty}
\frac{1}{\sqrt{2\uppi}} \mathrm{e}^{-bt^2/2} \,\mathrm{d}t
\\
& = & \bigl(\sqrt{b} - \sqrt{a}\bigr)\frac{1}{\sqrt{b}}
\leq \frac{|b-a|}{b} .
\end{eqnarray*}
Since $b=\max\{a,b\}$, this proves part (a).

(b) Let $\phi=\phi_1$, the probability density function of $Z$.
Then $\phi(\cdot-t)$ is the probability density function of $Z+t$.
By symmetry, we can assume that $t>0$. Observe that
the function $\min\{\phi(u),\phi(u-t)\}$ equals
$\phi(u)$ for $u\geq t/2$ and is symmetric (with respect to $u$)
about\vadjust{\goodbreak}
$u=t/2$. Using this observation with equation (\ref{eq-TVpdf2}) shows that
\begin{eqnarray*}
d_{\mathrm{TV}}(Z,Z+t) &=& 1-\int_{-\infty}^{\infty}\min\{\phi(u),\phi(u-t)\}
\,\mathrm{d}u \\
&=& 1-2\int_{t/2}^{\infty} \phi(u) \,\mathrm{d}u =
\int_{-t/2}^{t/2} \phi(u) \,\mathrm{d}u \leq \frac{t}{\sqrt{2\uppi}} ,
\end{eqnarray*}
where we have used the bound $\phi(u)\leq1/\sqrt{2\uppi}$ for all $u$.
This proves part (b).
\end{pf}

\begin{lemma}[$\bolds{(K=1)}$]
\label{lem-ppbound1}
For all positive $x$ and $y$,
\[
d_{\mathrm{TV}}(P_1(x,\cdot),P_1(y,\cdot))
\leq J|x-y| \bigl(1+ |\bar{Y}|/\sqrt{2\uppi} \bigr).
\]
\end{lemma}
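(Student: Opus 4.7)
The plan is to strip off the Gamma randomness using Lemma \ref{lem-gABC} and then reduce everything to one-dimensional Normal comparisons handled by Lemma \ref{lem-techA}. Recall that with $K=1$ and $\xi=0$, the Markov kernel $P_1(x,\cdot)$ is the law of
\[
f(x) \;=\; g(G, W_x), \qquad
g(u,w) := \frac{u}{\Sigma_0 + (J/2)\,w^2},\qquad
W_s := \frac{Z}{\sqrt{sJ+1}} - \frac{\bar Y}{sJ+1},
\]
where $G\sim\Gamma(\alpha+J/2,1)$ and $Z\sim N(0,1)$ are independent. Since $G$ is independent of $Z$ (hence of the pair $(W_x,W_y)$), Lemma \ref{lem-gABC} immediately yields
\[
d_{\mathrm{TV}}(P_1(x,\cdot), P_1(y,\cdot)) \;\le\; d_{\mathrm{TV}}(W_x, W_y).
\]

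Next I would split $W_s = V_s + T_s$ into its genuinely random part $V_s := Z/\sqrt{sJ+1}$ (a centered Normal of variance $1/(sJ+1)$) and its deterministic shift $T_s := -\bar Y/(sJ+1)$. A triangle inequality with translation invariance then gives
\[
d_{\mathrm{TV}}(W_x, W_y) \;\le\; d_{\mathrm{TV}}(V_x, V_y) + d_{\mathrm{TV}}\bigl(V_y,\, V_y + (T_y-T_x)\bigr).
\]
The first term is exactly the setting of Lemma \ref{lem-techA}(a) with $a=xJ+1$ and $b=yJ+1$, producing an upper bound $J|x-y|/\max\{xJ+1,yJ+1\}$, which is $\le J|x-y|$ since both $x,y>0$. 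For the second term, a direct computation gives $|T_y-T_x| = |\bar Y|\,J\,|x-y|/((xJ+1)(yJ+1))$; after scaling $V_y$ to a standard Normal, Lemma \ref{lem-techA}(b) yields
\[
d_{\mathrm{TV}}\bigl(V_y,\, V_y + (T_y-T_x)\bigr) \;\le\; \frac{|\bar Y|\,J\,|x-y|}{(xJ+1)\sqrt{yJ+1}\,\sqrt{2\uppi}} \;\le\; \frac{|\bar Y|\,J\,|x-y|}{\sqrt{2\uppi}},
\]
again using positivity of $x$ and $y$. Summing the two estimates gives the claimed bound.

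The conceptual step is the factorization $f(s) = g(G, W_s)$, which quarantines $G$ so that Lemma \ref{lem-gABC} can deliver the entire reduction in one stroke; once this is done, the decomposition $W_s = V_s + T_s$ is forced on us by the forms of Lemma \ref{lem-techA}(a) and (b). The only mildly delicate point is verifying that both pieces of the triangle inequality decay with the natural $(xJ+1)$ and $(yJ+1)$ denominators in a way that is compatible with the target coefficient $(1 + |\bar Y|/\sqrt{2\uppi})$; this is where the hypothesis $x,y>0$ is used, and is what prevents a completely trivial bound.
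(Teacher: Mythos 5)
Your proof is correct and takes essentially the same approach as the paper: both isolate the Gamma randomness via Lemma~\ref{lem-gABC}, then use translation invariance and the triangle inequality to split the resulting Normal comparison (anchored at $Z/\sqrt{yJ+1}$) into the two pieces handled by Lemma~\ref{lem-techA}(a) and (b), and finally invoke $xJ+1,\,yJ+1\ge 1$. The decomposition $W_s=V_s+T_s$ is just a notational repackaging of the paper's chain of equalities and inequalities; the resulting bound $\frac{J|x-y|}{\max\{xJ+1,yJ+1\}}+\frac{|\bar Y|J|x-y|}{(xJ+1)\sqrt{yJ+1}\sqrt{2\uppi}}$ is identical.
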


\begin{pf}
For given $s>0$,
$p_1(s,\cdot)$ is the probability density function of (\ref{GS.Afdef})
with $K=1$ and $\xi=0$. Therefore,
Lemma \ref{lem-gABC} implies that
\[
d_{\mathrm{TV}}(P_1(x,\cdot),P_1(y,\cdot))
\leq d_{\mathrm{TV}} \biggl( \frac{Z}{\sqrt{a}}-\frac{\bar{Y}}{a} ,
\frac{Z}{\sqrt{b}}-\frac{\bar{Y}}{b} \biggr),
\]
where $a=xJ+1$, $b=yJ+1$ and $Z\sim N(0,1)$. We then have
\begin{eqnarray*}
d_{\mathrm{TV}}(P_1(x,\cdot),P_1(y,\cdot)) & = &
d_{\mathrm{TV}} \biggl( \frac{Z}{\sqrt{a}} , \frac{Z}{\sqrt{b}} +\bar{Y}
\biggl[\frac{1}{a}-\frac{1}{b} \biggr] \biggr) \\
& \leq& d_{\mathrm{TV}} \biggl( \frac{Z}{\sqrt{a}} , \frac{Z}{\sqrt{b}} \biggr)
+ d_{\mathrm{TV}} \biggl( \frac{Z}{\sqrt{b}} ,
\frac{Z}{\sqrt{b}} +\bar{Y} \biggl[ \frac{b-a}{ab} \biggr] \biggr) \\
& = & d_{\mathrm{TV}} \biggl( \frac{Z}{\sqrt{a}} , \frac{Z}{\sqrt{b}} \biggr)
+ d_{\mathrm{TV}} \biggl( Z , Z+ \bar{Y} \biggl[ \frac{b-a}{a\sqrt{b}} \biggr]
\biggr) \\
& \leq& \frac{|b-a|}{\max\{a,b\}} +
\frac{|\bar{Y}||b-a|}{\sqrt{2\uppi} a\sqrt{b}}
\qquad\mbox{(by Lemma \ref{lem-techA})}.
\end{eqnarray*}
Finally, since $|a-b|=J|x-y|$ and $a,b\geq1$, the lemma follows.
\end{pf}

\begin{lemma}[$\bolds{(K=0)}$]
\label{lem-ppbound0}
For all positive $x$ and $y$,
\[
d_{\mathrm{TV}}(P_0(x,\cdot),P_0(y,\cdot)) =
\frac{1}{2}\int_0^{\infty}|p_0(x,z)-p_0(y,z)| \,\mathrm{d}z
\leq \frac{|x-y|}{\max\{x,y\}} .
\]
\end{lemma}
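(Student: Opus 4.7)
My plan is to peel off the $x$-dependence of $f(x)=G/(\Sigma_0+Z^2/(2x))$ layer by layer, using Lemma~\ref{lem-gABC} at each step to pass from a TV bound on $f(x)$ versus $f(y)$ to a TV bound on the ingredients that actually depend on $x$, and then reduce the problem to the already-solved Normal case in Lemma~\ref{lem-techA}(a). The opening equality in the statement is just (\ref{eq-TVpdf}) of Proposition~\ref{prop-Wbasic} applied to the densities $p_0(x,\cdot)$ and $p_0(y,\cdot)$, so only the inequality needs argument.

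First, set $E:=Z^2/2$ and write $f(x)=g(G,E/x)$ with $g(u,w):=u/(\Sigma_0+w)$. Since $G\sim\Gamma(\alpha+J/2,1)$ is independent of $Z$ (and hence of $E/x$ and $E/y$), Lemma~\ref{lem-gABC} applied with $U=G$ and $W_x=E/x,\,W_y=E/y$ gives
\begin{equation*}
d_{\mathrm{TV}}\bigl(P_0(x,\cdot),P_0(y,\cdot)\bigr)
= d_{\mathrm{TV}}\bigl(f(x),f(y)\bigr)
\leq d_{\mathrm{TV}}(E/x,E/y).
\end{equation*}

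Next, observe that $E/x=(Z/\sqrt{2x})^2$ is a deterministic function of $Z/\sqrt{2x}$. Applying Lemma~\ref{lem-gABC} again with $U$ degenerate (a constant) and $g(u,w)=w^2$ yields the standard data-processing inequality for TV distances under a deterministic map, so
\begin{equation*}
d_{\mathrm{TV}}(E/x,E/y) \leq d_{\mathrm{TV}}\bigl(Z/\sqrt{2x},\,Z/\sqrt{2y}\bigr).
\end{equation*}
Finally, Lemma~\ref{lem-techA}(a) with $a=2x$ and $b=2y$ gives
\begin{equation*}
d_{\mathrm{TV}}\bigl(Z/\sqrt{2x},\,Z/\sqrt{2y}\bigr)
\leq \frac{|2x-2y|}{\max\{2x,2y\}}
= \frac{|x-y|}{\max\{x,y\}},
\end{equation*}
and chaining the three inequalities proves the lemma.

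There is no serious obstacle: the key insight is simply that in the $K=0$ case, all of the $x$-dependence of $f(x)$ is carried by the single factor $(Z/\sqrt{2x})^2$, whose TV-contractive behavior in $x$ has already been quantified in Lemma~\ref{lem-techA}(a). It is worth noting, however, that the reason the bound cannot be of the stronger Lipschitz form $A|x-y|$ (as in Lemma~\ref{lem-ppbound1}) is precisely that the density of $f(x)$ becomes peaked near $0$ as $x\downarrow 0$, consistent with Remark~(2) following Theorem~\ref{prop-WTVgen}; the $\max\{x,y\}$ in the denominator captures exactly this degeneracy.
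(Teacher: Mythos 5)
Your proof is correct and takes essentially the same approach as the paper: both reduce the TV distance of $f(x)$ versus $f(y)$ to $d_{\mathrm{TV}}(Z/\sqrt{x},Z/\sqrt{y})$ via Lemma~\ref{lem-gABC} and then invoke Lemma~\ref{lem-techA}(a). The paper does this in a single application of Lemma~\ref{lem-gABC} (taking $g(u,w)=u/(\Sigma_0+w^2/2)$, $U=G$, $W_i=Z/\sqrt{\cdot}$), whereas you insert an extra intermediate step through $E/x=(Z/\sqrt{2x})^2$ and rescale in Lemma~\ref{lem-techA}(a); the difference is cosmetic.
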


\begin{pf}
The equality in the lemma comes from equation (\ref{eq-TVpdf}).
Recall from equation (\ref{GS.Afdef0})
that $p_0(x,\cdot)$ is the probability density function of
$G/(\Sigma_0+\frac{1}{2}[Z/\sqrt{x}]^2)$, where $G$ has a particular Gamma
distribution and $Z$ has the standard Normal distribution. Therefore,
Lemma \ref{lem-gABC} implies that
$ d_{\mathrm{TV}}(P_0(x,\cdot),P_0(y,\cdot))
\leq d_{\mathrm{TV}} ( \frac{Z}{\sqrt{x}},
\frac{Z}{\sqrt{y}} ) $
and Lemma \ref{lem-techA}(a) completes the proof.
\end{pf}

\begin{lemma}[$\bolds{(K=0)}$]
\label{lem-mubound}
$\pi_0 ( [0,\epsilon] ) \leq \mathrm{e}^{\Sigma_0}
\epsilon^{\alpha+(J-1)/2}$ for all $\epsilon$ in $(0,1]$.
\end{lemma}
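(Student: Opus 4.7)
The plan is to first identify the stationary distribution $\pi_0$ explicitly. The two-component Gibbs sampler $(\theta_t,S_t)$ has joint stationary distribution equal to the posterior density $p(\theta,s\mid Y)$ in (\ref{GS-post}). Setting $K=\xi=0$ there and applying the identity (\ref{GS.SS}) yields
\[
p(\theta,s\mid Y) \;\propto\; s^{\alpha-1+J/2}\exp\!\biggl[-\Sigma_0\, s - \frac{sJ(\bar{Y}-\theta)^2}{2}\biggr].
\]
Integrating out $\theta$ is then a Gaussian integral, which contributes a factor proportional to $s^{-1/2}$, so the $S$-marginal $\pi_0$ is the gamma distribution with shape $q := \alpha + (J-1)/2$ and rate $\Sigma_0$; its density is
\[
\pi_0(s) \;=\; \frac{\Sigma_0^{\,q}}{\Gamma(q)}\,s^{q-1}\,e^{-\Sigma_0 s},\qquad s>0.
\]

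With $\pi_0$ identified, the rest is a two-line estimate. For $\epsilon\in(0,1]$, the crude bound $e^{-\Sigma_0 s}\leq 1$ on the integration interval gives
\[
\pi_0\bigl([0,\epsilon]\bigr) \;\leq\; \frac{\Sigma_0^{\,q}}{\Gamma(q)}\int_0^\epsilon s^{q-1}\,\mathrm{d}s \;=\; \frac{\Sigma_0^{\,q}}{\Gamma(q+1)}\,\epsilon^{q}.
\]
It remains to verify $\Sigma_0^{\,q}/\Gamma(q+1)\leq e^{\Sigma_0}$, equivalently $\Sigma_0^{\,q}e^{-\Sigma_0}\leq \Gamma(q+1)$. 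The left-hand side, viewed as a function of $\Sigma_0\geq 0$, attains its unique maximum at $\Sigma_0 = q$ with value $q^q e^{-q}$, so it suffices to show the classical Stirling-type bound $q^q e^{-q}\leq \Gamma(q+1)$ for $q\geq 0$. For integer $q$ this is immediate from $e^q=\sum_{n\geq 0}q^n/n!\geq q^q/q!$; the general case follows from the standard Stirling inequality $\Gamma(q+1)\geq \sqrt{2\pi q}\,(q/e)^q$ (or by log-convexity of $\Gamma$).

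No real obstacle is anticipated: the proof is essentially a direct calculation once one recognises the gamma form of $\pi_0$. The point requiring the most care is tracking the $s^{-1/2}$ factor produced by the $\theta$-integration, which is what reduces the shape parameter from $\alpha+J/2$ to the correct value $\alpha+(J-1)/2$. The constant $e^{\Sigma_0}$ in the final bound is loose---it arises from discarding the exponential factor in the gamma density and then invoking the Stirling-type bound---but this slack is harmless for the intended application to hypothesis (\ref{eq.QTVprop2}) of Theorem \ref{prop-WTVgen}(b).
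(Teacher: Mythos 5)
Your proof is correct in essence but takes a genuinely different route from the paper's. You identify $\pi_0$ explicitly as the $\Gamma(q,\Sigma_0)$ density (with $q=\alpha+(J-1)/2$), which forces you to evaluate the normalizing constant $\Sigma_0^q/\Gamma(q)$, and the stated bound is then reduced to the Stirling-type inequality $q^q\mathrm{e}^{-q}\leq\Gamma(q+1)$. The paper never computes the normalizing constant: writing $\pi_0(s)=\zeta^{-1}\int p(\theta,s\mid Y)\,\mathrm{d}\theta$, it lower-bounds $\zeta$ by restricting the $s$-integral to $[0,1]$ and using $\mathrm{e}^{-s\Sigma_0}\geq\mathrm{e}^{-\Sigma_0}$ there, upper-bounds $\pi_0([0,\epsilon])$ by dropping $\mathrm{e}^{-s\Sigma_0}\leq 1$, and observes that the Gaussian $\theta$-integral contributes the same $s^{-1/2}$ factor to numerator and denominator, so the ratio collapses to $\int_0^\epsilon s^{q-1}\,\mathrm{d}s\big/\int_0^1 s^{q-1}\,\mathrm{d}s=\epsilon^q$. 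The constant $\mathrm{e}^{\Sigma_0}$ falls out immediately from the two exponential estimates, and no Stirling argument is needed. The paper's remark after the lemma makes clear that this ``bound the normalizing constant rather than compute it'' device is the real point of the proof, since in typical MCMC settings $\zeta$ is unknown; your shortcut is valid here precisely because this example is atypically explicit.

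There is also a small gap in your justification of the Stirling step: the bound $\Gamma(q+1)\geq\sqrt{2\pi q}\,(q/\mathrm{e})^q$ yields $\Gamma(q+1)\geq(q/\mathrm{e})^q$ only when $\sqrt{2\pi q}\geq 1$, i.e., $q\geq 1/(2\pi)$, whereas the lemma only requires $q=\alpha+(J-1)/2>0$. The inequality $q^q\mathrm{e}^{-q}\leq\Gamma(q+1)$ does hold for all $q>0$ (for instance, $g(q):=\ln\Gamma(q+1)-q\ln q+q$ satisfies $g(0^+)=0$ and $g'(q)=\psi(q+1)-\ln q=\psi(q)+1/q-\ln q>0$, using the classical bound $\psi(q)>\ln q-1/q$), but the citation as written does not cover $q\in(0,1/(2\pi))$, and the parenthetical appeal to log-convexity of $\Gamma$ does not obviously supply the missing case. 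This is a minor, repairable omission rather than a flaw in the approach.
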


\begin{pf}
The density $\pi_0(s)$ is the integral over $\theta$
of the posterior density $p(\theta,s|Y)$, which is given by
equation (\ref{GS-post}) with $K=0$.
Using equation (\ref{GS.SS}), we see that
\[
p(\theta,s|Y) =
\frac{1}{\zeta}
s^{\alpha-1+J/2}\exp[-sJ(\bar{Y}-\theta)^2/2] \mathrm{e}^{-s\Sigma_0}
\qquad\mbox{for $s > 0$ and $\theta\in\mathbb{R}$},
\]
where $\zeta=\zeta(\alpha,J,\Sigma_0,Y)$ is the normalizing constant.
Truncating the double integral that defines $\zeta$ shows that
\[
\zeta \geq \mathrm{e}^{-\Sigma_0} \int_0^1 \int_{-\infty}^{\infty}
s^{\alpha-1+J/2} \exp[-sJ(\bar{Y}-\theta)^2/2] \,\mathrm{d}\theta \,\mathrm{d}s .
\]
Therefore, for $\epsilon$ in $(0,1]$,
\begin{eqnarray*}
\pi_0 ( [0,\epsilon] ) & \leq& \frac{1}{\zeta}
\int_0^{\epsilon} \int_{-\infty}^{\infty}
s^{\alpha-1+J/2} \exp[-sJ(\bar{Y}-\theta)^2/2] \,\mathrm{d}\theta \,\mathrm{d}s \\
& \leq& \mathrm{e}^{\Sigma_0} \frac{\int_0^{\epsilon} s^{\alpha-3/2+J/2} \,\mathrm{d}s}{
\int_0^1 s^{\alpha-3/2+J/2} \,\mathrm{d}s} =
\mathrm{e}^{\Sigma_0}\epsilon^{\alpha-1/2+J/2} ,
\end{eqnarray*}
using
$\int_{-\infty}^{\infty} \exp[-sJ(\bar{Y}-\theta)^2/2] \,\mathrm{d}\theta=
(2\uppi Js)^{-1/2}$ in the second inequality.
\end{pf}

\begin{remark*}
Although we did not do it, one can compute $\zeta$ exactly when $K=0$.
In most practical MCMC applications, the normalizing
constant is hard to evaluate or even estimate -- which is one reason
that people use MCMC instead of numerical analysis.
In general, finding constants $B$ and
$\epsilon_0$ for equation (\ref{eq.QTVprop2}) can be hard. The above proof
suggests one way to approach the challenge.
\end{remark*}

\subsection{Example 2: Random logistic maps}\label{sec4.3}
Recall that we are considering i.i.d.\ random maps $f_1,f_2,\ldots$ on $[0,1]$
defined by
\[
f_i(x) = 4 B_i x(1-x),
\]
where $B_i\sim\operatorname{Beta}(a+\frac{1}{2},a-\frac{1}{2})$ [$a>\frac{1}{2}$],
and that
the $\operatorname{Beta}(a,a)$ distribution is the unique stationary distribution for
the iterated function system.

In this subsection, we prove Theorem \ref{thm-logconv}.
The proof of this theorem is similar to the proof of the `$K=1$' part of
Proposition \ref{prop-GSTVW}.

We begin with some notation.
Let $b(t)$ be the density of the $B_i$'s, that is,
\[
b(t) = \cases{
K_a t^{a-1/2}(1-t)^{a-3/2}  &\quad for $0\leq t\leq1$, \cr
0  &\quad otherwise,
}
\]
where $K_a = \Gamma(2a)/\Gamma(a+\frac{1}{2})\Gamma(a-\frac{1}{2})$.
Let
\[
Q(x) = 4x(1-x) \qquad\mbox{for }0\leq x \leq1.
\]
Observe that $0\leq Q(x)\leq1$ for $0\leq x \leq1$.
For a given $x\in(0,1)$, let $b_x(\cdot)$ be the probability
density function of $B_iQ(x)$, that is,
\[
b_x(z) = \cases{
\dfrac{1}{Q(x)} b \biggl( \dfrac{z}{Q(x)} \biggr) &\quad
for $0\leq z \leq Q(x)$, \cr
0  & \quad otherwise.
}
\]
Next, let $p(x,z)$ denote the transition density of the Markov chain
corresponding to the iterated logistic maps. We then have
%
\begin{equation}
\label{eq-logpdf}
p(x,z) = b_x(z) \qquad\mbox{for }x,z\in[0,1].
\end{equation}

\begin{lemma}
\label{lem-logwass}
For the iterated logistic maps with $a> 1/2$, we have
\[
\frac{1}{2}\int_0^1|p(x,z)-p(y,z)| \,\mathrm{d}z \leq
\frac{8a|y-x|}{\max\{Q(x),Q(y)\} }
\qquad\mbox{for }x,y\in(0,1).
\]
\end{lemma}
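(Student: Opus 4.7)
The left-hand side equals $d_{\mathrm{TV}}(BQ(x),BQ(y))$ by (\ref{eq-TVpdf}), where $B \sim \operatorname{Beta}(a+\frac{1}{2},a-\frac{1}{2})$ is the variable appearing in $f_i(x) = B \cdot Q(x)$. The plan is to reduce this to a one-parameter rescaling problem. Without loss of generality assume $Q(x) \leq Q(y)$, set $r := Q(x)/Q(y) \in (0,1]$, and apply the bijection $T(z) = z/Q(y)$ to both laws. Since total variation is invariant under measurable bijections of the state space, this yields
\[
d_{\mathrm{TV}}(BQ(x),BQ(y)) \;=\; d_{\mathrm{TV}}(rB,\, B).
\]

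Writing $h_r(z) := \frac{1}{r} b(z/r)\mathbf{1}_{[0,r]}(z)$ for the density of $rB$, the next step is to split the TV integral according to the (mismatched) supports $[0,r]$ and $[0,1]$:
\[
2\,d_{\mathrm{TV}}(rB,B) \;=\; \int_0^r \left| \tfrac{1}{r}\, b(z/r) - b(z) \right| dz \;+\; \int_r^1 b(z)\,dz .
\]
For the bulk piece, the change of variables $u = z/r$ recasts the integrand as $|b(u) - r\,b(ru)|$ on $[0,1]$. I would then use the algebraic decomposition
\[
b(u) - r\, b(ru) \;=\; (1-r)\, b(u) \,+\, r\bigl(b(u) - b(ru)\bigr),
\]
so that the first summand integrates to $(1-r)\int_0^1 b = 1-r$, while the second is estimated via the mean-value representation $b(u) - b(ru) = \int_{ru}^u b'(t)\,dt$ combined with $u - ru = u(1-r)$ and a Fubini swap. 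The tail piece $\int_r^1 b(z)\,dz = \Pr(B > r)$ is handled directly from the closed form of the Beta density.

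The argument closes by converting the resulting $C_a(1-r)$-style bound to the desired form through the identity $1-r = (Q(y)-Q(x))/Q(y)$ together with the estimate $|Q(x)-Q(y)| = 4|y-x|\,|1-x-y| \leq 4|y-x|$, which produces $4C_a |y-x|/\max\{Q(x),Q(y)\}$; careful tracking of the explicit constants arising from the Beta integrals above should yield $C_a = 2a$, matching the claimed coefficient $8a$.

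The main obstacle is the endpoint behavior of $b$: when $a < \frac{3}{2}$ the density $b(t)$ itself is unbounded at $t=1$ and $b'$ is non-integrable there, so any naive $L^\infty$ bound on $b'$ is useless. A successful proof must exploit cancellation between the bulk and tail contributions and rely on the integrability of the \emph{weighted} combinations $(1-t)|b'(t)|$ and $t\,|b'(t)|$ that appear after the Fubini step, rather than on pointwise bounds for $b'$. A secondary subtlety is that both the bulk and tail contributions are of the same order near $r=1$, so neither can be discarded: they must be combined so that the singular pieces compensate and only the claimed linear-in-$(1-r)$ residual survives.
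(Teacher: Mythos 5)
Your route is genuinely different from the paper's. You split the TV integral by support into a bulk over $[0,r]$ and a tail over $[r,1]$, use the decomposition $b(u)-rb(ru)=(1-r)b(u)+r\bigl(b(u)-b(ru)\bigr)$, and close with a mean-value plus Fubini argument. The paper instead works directly with the one-sided representation $d_{\mathrm{TV}}=\int_{\{b_x>b_y\}}(b_x-b_y)\,\mathrm{d}z$, replaces $(Q(y)-z)^{a-3/2}$ by $(Q(x)-z)^{a-3/2}$ under the integral to pull out the factor $1-(Q(x)/Q(y))^{2a-1}$, and then applies the elementary estimate $v^p-u^p\leq\max\{p,1\}v^{p-1}|v-u|$. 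You also correctly single out the endpoint behavior of $b$ near $t=1$ as the dangerous case when $a<3/2$.

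However, the fallback you propose -- that ``cancellation between bulk and tail'' will still produce a linear-in-$(1-r)$ bound -- cannot work, because for $1/2<a<3/2$ the quantity $d_{\mathrm{TV}}(rB,B)$ genuinely is not $O(1-r)$ as $r\to 1^{-}$. For any such $a$, one checks directly that $h_r(z)>b(z)$ for every $z\in(0,r)$ (taking logarithms, since the exponent $a-3/2$ is negative, one side of the requisite inequality is positive and the other negative), so $d_{\mathrm{TV}}(rB,B)$ equals exactly $\Pr(B>r)$, and $\Pr(B>r)\sim K_a(1-r)^{a-1/2}/(a-1/2)$ as $r\to 1^{-}$, which strictly dominates $1-r$. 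Concretely, for $a=1$, $x=0.1$, $y=0.10114$ one has $r=Q(x)/Q(y)=0.99$, so the left-hand side of the lemma equals $\Pr(B>0.99)\approx 0.127$ while the right-hand side is $8|y-x|/Q(y)\approx 0.025$. Thus the tail $\Pr(B>r)$ is the true order of magnitude, not an overestimate that cancellation can reduce, and your plan at best recovers the paper's result in the regime $a\geq 3/2$. You should also note that the paper's own argument has exactly the same restriction: the replacement of $(Q(y)-z)^{a-3/2}$ by $(Q(x)-z)^{a-3/2}$ is an upper bound only when the exponent $a-3/2$ is nonnegative, so the chain of inequalities in the paper's proof is valid only for $a\geq 3/2$, and the lemma as stated appears to fail for $1/2<a<3/2$.
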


\begin{pf}
Without loss of generality, assume that $0<Q(x)\leq Q(y)$.
By equation (\ref{eq-logpdf}),
Proposition~\ref{prop-Wbasic}
and some calculation similar to that which was involved in the proof of
Lemma \ref{lem-techA}, we have
%
\begin{eqnarray}\label{eq.Qratio}
\nonumber
&&\frac{1}{2}\int_0^1|p(x,z)-p(y,z)| \,\mathrm{d}z \\
&&\quad=
\int_{\{z \dvtx b_x(z)>b_y(z)\}} \bigl(b_x(z)-b_y(z) \bigr) \,\mathrm{d}z \nonumber\\
\nonumber
&&\quad= \int_{\{z \dvtx b_x(z)>b_y(z)\}} K_a
\biggl( \frac{z^{a-1/2}(Q(x)-z)^{a-3/2}}{Q(x)^{2a-1}} -
\frac{z^{a-1/2}(Q(y)-z)^{a-3/2}}{Q(y)^{2a-1}} \biggr) \,\mathrm{d}z
\nonumber\\
&&\quad <  \int_{\{z \dvtx b_x(z)>b_y(z)\}} K_a
z^{a-1/2} \biggl( \frac{(Q(x)-z)^{a-3/2}}{Q(x)^{2a-1}} -
\frac{(Q(x)-z)^{a-3/2}}{Q(y)^{2a-1}} \biggr) \,\mathrm{d}z
\nonumber\\[-8pt]\\[-8pt]
& & \qquad\quad\mbox{(since $Q(y)-z\geq Q(x)-z\geq0$)}\nonumber \\
\nonumber
&&\quad= \biggl( \frac{1}{Q(x)^{2a-1}} - \frac{1}{Q(y)^{2a-1}} \biggr)
\int_{\{z \dvtx b_x(z)>b_y(z)\}} K_a z^{a-1/2}\bigl(Q(x)-z\bigr)^{a-3/2} \,\mathrm{d}z \\
\nonumber
&&\quad \leq \biggl(1- \biggl( \frac{Q(x)}{Q(y)} \biggr)^{2a-1} \biggr)
\int_0^{Q(x)} \frac{ K_a z^{a-1/2}(Q(x)-z)^{a-3/2} }{Q(x)^{2a-1}} \,\mathrm{d}z
\\
&&\quad= 1- \biggl( \frac{Q(x)}{Q(y)} \biggr)^{2a-1}.\nonumber
\end{eqnarray}
We now observe that for $p>0$,
%
\begin{equation}
\label{eq-MVT1}
v^p-u^p \leq \max\{p,1\} v^{p-1}|v-u|
\qquad\mbox{for $v\geq u\geq0$}
\end{equation}
(for $0<p\leq1$, this is simple algebra and for $p>1$, this follows from
applying the mean value theorem to the function $t\mapsto t^p$). Next, since
$|Q'(x)| = |4-8x| \leq4$, the mean value theorem implies that
%
\begin{equation}
\label{eq-MVT2}
|Q(y)-Q(x)| \leq 4 |y-x| \qquad\mbox{for $x,y\in[0,1]$}.
\end{equation}
Finally,
for $0<Q(x)\leq Q(y)$, equations (\ref{eq.Qratio})--(\ref{eq-MVT2}) imply that
\begin{eqnarray*}
\frac{1}{2}\int_0^1|p(x,z)-p(y,z)| \,\mathrm{d}z & \leq&
\frac{ Q(y)^{2a-1}-Q(x)^{2a-1}}{Q(y)^{2a-1}} \\
& \leq& \frac{\max\{(2a-1),1\} |Q(y)-Q(x)|}{Q(y)}
\\
& \leq& \frac{[(2a-1)+1] 4 |y-x|}{Q(y)} .
\end{eqnarray*}
This proves the lemma.
\end{pf}

We can now apply Theorem \ref{prop-WTVgen}(b) as follows.
Let $\mu=\delta_x$
(point mass at $x$) and let $\pi_a$ be the equilibrium $\beta_{a,a}$
distribution. Also, let $\lambda$ be Lebesgue measure and let the function
$h(\cdot)$ be $Q(\cdot)/(16a)$. Lemma \ref{lem-logwass} then proves
condition (i) of Theorem \ref{prop-WTVgen}(b).
For condition (ii), we need to estimate
$\pi_a(\{y\in[0,1] : h(y)\leq\epsilon\})$ for small positive $\epsilon$.
Let $A=16a$. Observe that if $Q(y)/A\leq\epsilon$
and $0\leq y\leq1/2$, then $A\epsilon\geq4y(1-y)\geq4y(1/2)$, so
$y\leq A\epsilon/2$. Similarly, if $Q(y)/A\leq\epsilon$ and
$1/2\leq y\leq1$, then $y\geq1-A\epsilon/2$. Therefore, for $a\geq1$
and $0<\epsilon\leq1/A$, we have
%
\begin{eqnarray}
\nonumber
&&\pi_a\bigl(\{y\in[0,1] \dvtx h(y)\leq\epsilon\}\bigr) \\
\nonumber
&&\quad =
\pi_a([0,A\epsilon/2]) + \pi_a([1-A\epsilon/2,1]) \\
\nonumber
&&\quad =  2 \pi_a([0,A\epsilon/2])
\qquad\mbox{(since $\pi_{a}$ is symmetric about $1/2$)}
\\
\label{eq-logint1}
&&\quad = \tilde{K}_a \int_0^{A\epsilon/2}t^{a-1}(1-t)^{a-1} \,\mathrm{d}t
\qquad\bigl(\mbox{where $\tilde{K}_a=\Gamma(2a)/\Gamma(a)^2$}\bigr) \\
\label{eq-logint2}
&&\quad  \leq \tilde{K}_a \int_0^{A\epsilon/2}t^{a-1} \,\mathrm{d}t \\
\nonumber
&&\quad  =  \frac{\tilde{K_a} (8a\epsilon)^a}{a} .
\label{eqLIB}
\end{eqnarray}
Therefore, equation (\ref{eq.QTVprop2}) holds with $q=a$,
$B=\tilde{K_a}8^a a^{a-1}$ and $\epsilon_0 = 1/(16a)$.
For $1/2<a<1$, everything is the same except that we use the bound
$(1-t)^{a-1}\leq2^{1-a}$ for $0<t\leq1/2$
in the integrand of (\ref{eq-logint1}), obtaining an extra multiplicative
factor of $2^{1-a}$ in equation (\ref{eq-logint2}) and hence
$B = 2\tilde{K_a}4^a a^{a-1}$. We have thus shown that
Theorem \ref{thm-logconv} follows from Theorem \ref{prop-WTVgen}(b).

\section*{Acknowledgements}
We are grateful to David Steinsaltz for very helpful discussions,
to Jeffrey Rosenthal for pointers to the literature and to the referees
for useful suggestions which helped us improve this paper.
The research of N. Madras was supported in part by a Discovery Grant
from NSERC of Canada.

\printhistory

\end{document}